\newcounter{cnstcnt}
\title{Blow-up prevention by sub-logistic sources in Keller-Segel cross diffusion type system }
\author{Minh Le }
\date{\today}
\begin{document}
\maketitle
\begin{abstract}
The focus of this paper is on solutions to a two-dimensional Keller-Segel system containing sub-logistic sources. We show that the presence of sub-logistic terms is adequate to prevent blow-up phenomena even in strongly degenerate Keller-Segel systems. Our proof relies on several techniques, including parabolic regularity theory in Orlicz spaces, variational arguments, interpolation inequalities, and the Moser iteration method. 
\end{abstract}
\numberwithin{equation}{section}
\newtheorem{theorem}{Theorem}[section]
\newtheorem{lemma}[theorem]{Lemma}
\newtheorem{remark}{Remark}[section]
\newtheorem{Prop}{Proposition}[section]
\newtheorem{Def}{Definition}[section]
\newtheorem{Corollary}{Corollary}[theorem]
\allowdisplaybreaks
\section{Introduction} \label{introduction}
We consider the following nonlinear parabolic cross-diffusion partial differential equations arises from chemotaxis models
\begin{equation} \tag{KS} \label{general-log-1}
    \begin{cases}
        u_t = \nabla \cdot(D(v) \nabla u) - \nabla \cdot (uS(v)\nabla v) +f(u)\\
         v_t = \Delta v -v +u
    \end{cases}
\end{equation}
in a bounded domain $\Omega \subset \mathbb{R}^2$ with smooth boundary, where
\begin{align} \label{diffusion-condition}
    0<D \in C^2[0,\infty) \quad \text{ and } S \in C^2([0,\infty))\cap W^{1,\infty} ((0,\infty)) \text{ such that } S' \geq 0,
\end{align} and $f$ is a smooth function generalizing the sub-logistic and signal production source respectively,  
\begin{align} \label{logistic}
    f(u) = ru-\mu \frac{u^2}{\ln^p(u+e)} ,\quad \text{with } r\in \mathbb{R}, \mu>0, \text{and } p>0,
\end{align}
The system \eqref{general-log-1} is complemented with nonnegative initial conditions in $W^{1,\infty}(\Omega)$ not identically zero:
\begin{align} \label{initial-data}
    u(x,0)=u_0(x), \qquad v(x,0)=v_0(x), \qquad \text{with } x\in \mathbb{R},
 \end{align}
and homogeneous Neumann boundary condition are imposed as follows:
\begin{equation} \label{boundary-data}
    \frac{\partial u}{\partial \nu } = \frac{\partial v}{\partial \nu }  = 0, \qquad x\in \partial \Omega,\, t \in (0, T_{\rm max}),
\end{equation}
where $\nu$ denotes the outward normal vector. Through out this paper, notations $c$ and $\epsilon$, unless being specified, present for positive constants depending on various parameters and sufficiently small positive numbers respectively. \\
Chemotaxis, which describes the movement of a cell toward a chemical signal, has been extensively studied, particularly from a mathematical perspective. One of the first PDE models for chemotaxis, such as \eqref{general-log-1}, was introduced in \cite{Keller}. Interested readers are referred to \cite{Horstmann, Hillen} to learn more about the derivation of the system \eqref{general-log-1} and its biological background. Moreover, \cite{Winkler-2015} summarizes many important results and fundamental analysis techniques. Among the many forms studied, the simplified form in two spatial dimensions, with $S \equiv D \equiv 1$ and $f \equiv 0$, has been extensively investigated in various directions, such as global existence and qualitative behavior of solutions in \cite{Dolbeault, Dolbeault1}, finite-time blow-up solutions in \cite{Nagai1, Nagai2}, and blow-up sets in \cite{Nagai3, Nagai4}. One of the most interesting property of the simplified Keller-Segel system is the critical mass phenomenon, which means that if the mass is strictly less than a certain number, then solutions exist globally and if the mass is strictly bigger than a number, then solutions blow up in finite time. The critical mass was found to be $4\pi$ when $\Omega = B(0,1)$ and $8\pi$ when the initial data are non-negative and radial in \cite{NSY}. However, in higher dimensions, this property is no longer true. In fact, \cite{Winkler-2010} showed that a finite blow-up solution can be constructed in a smooth bounded domain regardless of how small the mass is. \\
Logistic sources \eqref{logistic} describing the growth rate of population \eqref{logistic} can prevent the blow-up solutions. Indeed, it was proved in \cite{Tello+Winkler}, that the term $-\mu u^2$, where $\mu>0$, is sufficient to prevent blow-up phenomena in two dimensional domain. However, in higher dimensions it requires that $\mu$ is larger than a certain number $\mu_0$ when the second equation of \eqref{general-log-1} is elliptic. These results were later improved in \cite{Winkler-2013} for fully parabolic systems and an extra convexity assumption on the domain $\Omega$. Furthermore, a precise formula for $\mu_0$ in a non-convex bounded domain $\Omega \subset \mathbb{R}^3$ was found in \cite{Tian}. A natural question "Is the term $-\mu u^2$ optimal to prevent blow-up in two dimensional domain? " was studied in \cite{Tian1} when $D$ and $S$ are constant functions. It turns out that weaker terms $\frac{-\mu u^2}{\ln^p(u+e)}$ where $0<p<1$ are sufficient to avoid blow-up solutions. \\
In more general conditions for $D$ and $S$, as described by \eqref{diffusion-condition} without the non-decreasing requirement of $S$, were studied in \cite{MW2022}. It was proven that the terms $-\mu u^2$ are sufficient in preventing blow-up solutions by using variational techniques and parabolic regularity theory in Orlicz spaces. In this paper, we apply parabolic regularity results in Orlicz spaces to assert that the term $-\mu u^2$ is not optimal in preventing blow-up solutions. Indeed, our results indicate that solutions to the system \eqref{general-log-1} under the conditions from \eqref{diffusion-condition} to \eqref{boundary-data} exist globally when the terms $-\mu u^2$ are replaced by $\frac{-\mu u^2}{\ln^p(u+e)}$, where $0<p<1$, with an extra assumption that $\inf_{s\geq 0}D(s)> 0$ or $0<p<1/2$ without it.\\
The selling point of the paper is the introduction of the energy functional \[y(t):=\int_\Omega u\ln^k(u+e) +|\nabla v|^2,\] where the value of $k$ is determined later. To establish an appropriate differential inequality for $y$, we perform a tedious analysis calculation, utilize interpolation inequalities in Sobolev spaces, and employ Moser iteration arguments. Our approach is a combination of two previous ideas: the first, proposed in \cite[Lemma 3.2]{Tian1}, offers a common method to obtain a uniform bound for $\left \| u\ln(u) \right \|_{L^1(\Omega)}$, while the second, described in \cite[Lemma 4.5]{MW2022}, provides an additional argument for obtaining a uniform bound for $\left \| u\ln^2(u) \right \|_{L^1(\Omega)}$. It is important to note that using only one of these ideas is insufficient to obtain any $\left \| u\ln^k(u) \right \|_{L^1(\Omega)}$ bounds for solutions. \\
The paper is organized as follows. Section \ref{section-main-theorems} briefly contains our main results. The local well-posedness of solutions and some interpolation inequalities are presented in Section \ref{preliminairies}. In Section \ref{priori}, we establish a priori estimates including $L\ln^k (L+e)$, and $L^2$ bounds for solutions. Finally, the main theorems are proved in Section \ref{prooftheorem}.
\section{Main theorems} \label{section-main-theorems}
In this section, we summarize two main theorems for the existence of global solutions to nondegenerate and degenerate chemotaxis systems. Let us begin with the nondegenate case:
\begin{theorem} [Nondegenerate] \label{nondegenerate}
In addition to the conditions from \eqref{diffusion-condition} to \eqref{boundary-data}, we assume that $p<1$, and $\inf_{s\geq 0} D(s)>0$.
The system \eqref{general-log-1}  possesses a global classical bounded solution at all time.
\end{theorem}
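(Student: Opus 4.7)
The plan is to work on the maximal existence interval $(0,T_{\max})$ obtained from the local well-posedness theory of Section \ref{preliminairies} and to invoke the standard extensibility criterion: it suffices to rule out blow-up of $\|u(\cdot,t)\|_{L^\infty(\Omega)}$ at $T_{\max}$. Since the uniform ellipticity $\inf_{s\ge 0} D(s) \ge \delta > 0$ is available, my strategy is a three-step bootstrap of the form $L\ln^k L \Rightarrow L^2 \Rightarrow L^\infty$. Step one is to bound $\int_\Omega u\ln^k(u+e)\,dx$ uniformly in $t$ for some $k>1$ to be chosen. Step two promotes this, via Orlicz-space parabolic regularity applied to the $v$-equation, to an $L^2$-bound on $u$. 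Step three is a Moser iteration, made routine by the nondegeneracy of $D$.

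For step one I would analyze the functional
\[
y(t) = \int_\Omega u\ln^k(u+e)\,dx + \int_\Omega |\nabla v|^2\,dx.
\]
Testing the first equation with $F'(u)$ where $F(u)=u\ln^k(u+e)$, integrating by parts, and using $S\in L^\infty$ yields, for large $u$, a good dissipation of order $\int D(v)\ln^{k-1}(u+e)|\nabla u|^2/u\,dx$, a contribution from the sub-logistic sink of order $\mu\int u\ln^{k-p}(u+e)\,dx$, and a bad cross term coming from the chemotactic flux. Simultaneously, testing the $v$-equation with $-\Delta v$ produces $\tfrac12\frac{d}{dt}\int|\nabla v|^2+\int|\Delta v|^2+\int|\nabla v|^2=-\int u\,\Delta v\,dx$. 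Combining the two identities and using Young's inequality to split the cross terms between the good dissipations and a remainder of the form $\int u\ln^{k-1}(u+e)|\nabla v|^2\,dx$, one expects a differential inequality of the shape
\begin{equation*}
y'(t)+c_1\int_\Omega\frac{u^2}{\ln^p(u+e)}\,dx+c_2\int_\Omega|\Delta v|^2\,dx\le C+C\int_\Omega u\ln^{k-1}(u+e)|\nabla v|^2\,dx.
\end{equation*}

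The decisive step is to absorb the last term into the dissipations on the left, and this is where I expect the main obstacle. I would estimate it via a two-dimensional Gagliardo-Nirenberg inequality applied to $|\nabla v|$ (with $\|\Delta v\|_{L^2}$ as the high norm) together with a sharp interpolation exploiting the $L\ln^k L$ control of $u$ itself, following the template of \cite[Lemma 3.2]{Tian1} combined with the iteration idea of \cite[Lemma 4.5]{MW2022}. The margin for absorption is exactly the logarithmic gain provided by $p<1$: for $k$ chosen so that $k-1<1-p$, a pointwise comparison shows that the sub-logistic sink $\mu u^2/\ln^p(u+e)$ dominates the bad contribution up to a linear-in-$y$ remainder, and Gronwall's lemma yields $y\in L^\infty(0,T_{\max})$. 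Once this uniform $L\ln^k L$ bound for $u$ is in hand, Orlicz-Sobolev parabolic regularity applied to $v_t-\Delta v+v=u$ gives sufficient integrability of $\nabla v$ to test the first equation with $u$ itself, producing an $L^2$-bound for $u$, after which Moser iteration promotes this to the desired uniform $L^\infty$-bound and contradicts finite-time blow-up. The hard part throughout is the sharpness of the logarithmic book-keeping: naive Young's inequality is never enough; one must track the exact exponents on the $\ln(u+e)$ factors at every step to keep $p<1$ as the only sub-logistic condition.
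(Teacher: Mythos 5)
Your overall architecture ($L\ln^k L \Rightarrow L^q \Rightarrow L^\infty$ with Moser iteration, ruling out blow-up via the extensibility criterion) is the same as the paper's, and your choice of functional $y(t)=\int_\Omega u\ln^k(u+e)+|\nabla v|^2$ and of the exponent window $k<2-p$ are exactly right. But there is a genuine gap at what you yourself call the decisive step, and it is not a technicality. By splitting the chemotactic cross term with Young's inequality against the $u$-dissipation you are left with $\int_\Omega u\ln^{k-1}(u+e)|\nabla v|^2$, and the absorption you propose (Gagliardo--Nirenberg on $|\nabla v|$ with $\|\Delta v\|_{L^2}$ as the high norm, interpolated against the $L\ln^k L$ quantity) does not close: the interpolation $\|\nabla v\|_{L^4}^2\lesssim\|\Delta v\|_{L^2}\|\nabla v\|_{L^2}+\|\nabla v\|_{L^2}^2$ reintroduces $\|\nabla v\|_{L^2}^2\le y(t)$ multiplicatively against the dissipation $\int(\Delta v)^2$, so after Young's inequality you obtain a term of the form $C\,y(t)\int_\Omega(\Delta v)^2$ (or $C\,y(t)\int_\Omega u^2\ln^{2k-2}(u+e)$), which cannot be absorbed by the fixed-coefficient dissipations on the left; and $\|\nabla v\|_{L^2}$ is not a priori bounded at this stage in two dimensions (the $L^1$ bound on $u$ is not enough), so the argument is circular.

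The paper avoids this term altogether, and this is precisely where the structural hypothesis $S'\ge 0$ from \eqref{diffusion-condition} — which your proposal never uses — enters. Each cross term is first rewritten as $\int_\Omega S(v)\nabla\phi(u)\cdot\nabla v$ for an explicit primitive $\phi(u)\le u\ln^{k-1}(u+e)$, and then integrated by parts onto the $v$-factor, giving $-\int_\Omega S(v)\phi(u)\Delta v-\int_\Omega S'(v)\phi(u)|\nabla v|^2$. The second integral is discarded because it has a favorable sign ($S'\ge0$, $\phi\ge0$), and the first is estimated by $\epsilon\int_\Omega(\Delta v)^2+c(\epsilon)\int_\Omega u^2\ln^{2k-2}(u+e)$, which is absorbed into the sub-logistic sink $-\mu\int_\Omega u^2\ln^{k-p}(u+e)$ precisely because $2k-2<k-p$. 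Without this device (or the constant-$S$ simplification of the reference you cite, where the $S'$ term is absent), your differential inequality for $y$ does not close. A secondary, less serious divergence: in the paper the passage from $L\ln L$ to $L^q$ (Lemma \ref{L2est.E1}) is not done through Orlicz parabolic regularity for $v$ (that tool is reserved for the degenerate case), but through a coupled functional $\frac1q\int u^q+\frac1{2q}\int|\nabla v|^{2q}$ together with the interpolation inequality of Lemma \ref{ine-GN}, which uses the uniform $\|u\ln(u+e)\|_{L^1}$ bound to absorb $\int u^{q+1}$ into $\epsilon\int|\nabla u^{q/2}|^2$; your sketch of this step is too vague to assess but the intended mechanism is at least plausible. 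The step-one gap, however, must be repaired before the rest can proceed.
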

\begin{remark}
Our theorem aligns with and strengthens the outcomes of \cite[Theorem 1.1]{Tian1} by allowing $S$ and $D$ to be arbitrary functions satisfying \eqref{diffusion-condition} rather than being restricted to constant functions.
\end{remark}

The degenerate case are presented as follows:
\begin{theorem}[Degenerate] \label{degenerate}
    If $p<1/2$, then the system \eqref{general-log-1} with the conditions from \eqref{diffusion-condition} to \eqref{boundary-data}  admits a global classical bounded solution in $\Omega \times (0,\infty)$.
\end{theorem}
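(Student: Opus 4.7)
The plan is to show that the maximal existence time $T_{\max}$ is necessarily $+\infty$ and that the solution remains bounded on $\Omega \times (0,\infty)$ by verifying the blow-up criterion provided by the local well-posedness theory of Section \ref{preliminairies}. I would argue by contradiction: assuming $T_{\max} < \infty$, it suffices to exhibit a uniform bound for $\|u(\cdot,t)\|_{L^\infty(\Omega)}$ on $[0, T_{\max})$.

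The first step is to invoke the a priori estimates developed in Section \ref{priori}. For $p < 1/2$ the energy functional $y(t) = \int_\Omega u \ln^k(u+e) + \int_\Omega |\nabla v|^2$ — with $k > 1$ selected in that section — satisfies a Gr\"onwall-type differential inequality whose damping comes from the sub-logistic term $-\mu u^2/\ln^p(u+e)$; this simultaneously controls $\int_\Omega u \ln^k(u+e)$ and $\int_\Omega |\nabla v|^2$ on $[0, T_{\max})$. The second step is to upgrade the regularity of $v$: the fact that $u$ lies in an Orlicz class $L\ln^k L$, coupled with parabolic regularity in Orlicz spaces in the spirit of \cite{MW2022}, promotes this bound to $\nabla v \in L^\infty_{\mathrm{loc}}([0,T_{\max}); L^q(\Omega))$ for some $q > 2$, crucially without requiring a positive lower bound on $D$.

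The third step is to iterate to higher integrability for $u$. I would test the first equation of \eqref{general-log-1} against $u^{r-1}$ for successive values of $r \geq 2$, using H\"older's inequality, the $W^{1,\infty}$ bound on $S$, and the two-dimensional Sobolev embedding to estimate the cross-diffusive flux in terms of $\|\nabla v\|_{L^q}$ and a controllable portion of $\int_\Omega u^r$. Because $D(v)$ carries no uniform positivity, the diffusive integral $\int D(v)|\nabla u^{r/2}|^2$ cannot be used to absorb the cross term; instead, the sub-logistic dissipation $\mu\int_\Omega u^{r+1}/\ln^p(u+e)$, together with an interpolation against the $L^2$ bound for $u$ from Section \ref{priori}, closes the estimate. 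This yields $u \in L^\infty_{\mathrm{loc}}([0, T_{\max}); L^r(\Omega))$ for every finite $r$, after which a standard Moser iteration on the first equation produces the desired $L^\infty$ bound, contradicting $T_{\max} < \infty$.

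The chief obstacle is the lack of any uniform lower bound on the diffusion coefficient: in the nondegenerate setting of Theorem \ref{nondegenerate} the term $\int D(v)|\nabla u^{r/2}|^2$ absorbs cross-diffusion contributions directly, while here that channel is unavailable and the full burden falls on the logarithmic damping. The hypothesis $p < 1/2$ enters precisely at this junction: it is the sharpest exponent for which the weight $1/\ln^p(u+e)$ is still strong enough to dominate the cross-diffusion remainder after the Orlicz-space upgrade of $\nabla v$ has been performed, so the differential inequality for $y(t)$ can be closed and the bootstrap to $L^\infty$ can proceed.
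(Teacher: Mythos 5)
Your overall skeleton (energy functional, upgrade of $v$, $L^r$ bootstrap, Moser iteration, blow-up criterion) matches the paper's, but there is a genuine gap in how you propose to handle the degeneracy, and it occurs at exactly the step where the hypothesis $p<1/2$ is actually used. In your second step you only claim $\nabla v\in L^\infty_{\mathrm{loc}}([0,T_{\max});L^q(\Omega))$ for some $q>2$; that does not remove the degeneracy of $D$. The paper's mechanism is stronger and different: Lemma \ref{general.lemma1} gives not only the bound on $y(t)$ but also the space--time bound $\sup_t\int_t^{t+\tau}\int_\Omega u^2\ln^{k-p}(u+e)<\infty$, and the condition $p<1/2$ is precisely what makes the interval $(1+p,\,2-p)$ nonempty, so one can pick $k$ with $\alpha:=k-p>1$. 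Proposition \ref{Orlicz-regularity} (parabolic regularity in Orlicz spaces) then yields a uniform $L^\infty$ bound on $v$ itself. Since $D$ is continuous and strictly positive on $[0,\infty)$ and $v$ now ranges in a compact set, one recovers $\inf D(v)>0$, i.e.\ the system becomes effectively nondegenerate, and Lemma \ref{L2est.E1'} simply reruns the nondegenerate testing procedure of Lemma \ref{L2est.E1}. Your stated reason for $p<1/2$ (sharpness of the logarithmic damping against the cross-diffusion remainder) is not where the hypothesis enters.

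Your third step, as written, would fail. You propose to close the $L^r$ estimate with no lower bound on $D$, letting the sub-logistic dissipation $\mu\int_\Omega u^{r+1}/\ln^p(u+e)$ absorb the cross-diffusive flux. After integration by parts that flux is $\tfrac{r-1}{r}\int_\Omega S(v)\nabla u^r\cdot\nabla v$, which contains $\nabla u$; without the diffusive integral $\int_\Omega D(v)|\nabla u^{r/2}|^2$ there is nothing to absorb it, and a second integration by parts produces terms of the form $\int_\Omega S(v)u^r\Delta v$. Young's inequality then generates $\int_\Omega u^{2r}$, which for $r\geq 2$ cannot be dominated by $\mu\int_\Omega u^{r+1}/\ln^p(u+e)$ no matter how small $p$ is. Moreover, the Gagliardo--Nirenberg absorption of $\int_\Omega u^{r+1}$ in Lemma \ref{ine-GN} itself requires the gradient term $\int_\Omega|\nabla u^{r/2}|^2$ with a good sign, which again presupposes nondegenerate diffusion. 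So the order of operations is essential: first bound $v$ in $L^\infty$ via the Orlicz-space regularity to restore $\inf D(v)>0$, and only then run the testing and Moser arguments.
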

\begin{remark}
The theorem represents an advancement over the findings of \cite[Theorem 1.4]{MW2022} as it incorporates sub-logistic sources instead of logistic ones. However, it should be noted that our result assumes the non-decreasing property of $S$, whereas \cite[Theorem 1.4]{MW2022} does not require this condition.
\end{remark}
\section{Preliminairies} \label{preliminairies}
The local existence and uniqueness of non-negative classical solutions to the system \eqref{general-log-1} can be established by adapting and adjusting the fixed point argument and standard parabolic regularity theory. For further details, we refer the reader to \cite{Winkler-Horstmann, Tello+Winkler, Lankeit-2017}. For convenience, we adopt Lemma 4.1 from \cite{MW2022}.
\begin{lemma} \label{local-existence}
    Let $\Omega \subset \mathbb{R}^2$ be a bounded domain with smooth boundary, and suppose $r\in \mathbb{R}$ and $\mu>0$ and that \eqref{diffusion-condition}, \eqref{initial-data}, and \eqref{boundary-data} hold. Then there exist $T_{\rm max}\in (0,\infty]$ and functions 
    \begin{equation}
        \begin{cases}
            u \in C^0 \left ( \Bar{\Omega}\times (0,T_{\rm max}) \right ) \cap C^{2,1} \left ( \Bar{\Omega}\times (0,T_{\rm max}) \right )\text{ and} \\
            v \in \bigcap_{q>2} C^0 \left ( [0,T_{\rm max}); W^{1,q}(\Omega
            ) \right )\cap C^{2,1} \left ( \Bar{\Omega}\times (0,T_{\rm max}) \right )
        \end{cases}
    \end{equation}
    such that $u>0$ and $v>0$ in $\Bar{\Omega}\times (0,\infty)$, that $(u,v)$ solves \eqref{general-log-1} classically in $\Omega \times (0,T_{\rm max})$, and that  
    \begin{equation}
        \text{if }T_{\rm max}<\infty, \quad \text{then } \limsup_{t\to T_{\rm max}} \left \{ \left \| u \right \|_{L^\infty(\Omega)} +\left \| u \right \|_{W^{1,\infty}(\Omega)} \right \} = \infty.
    \end{equation}
\end{lemma}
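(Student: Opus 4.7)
The strategy is a Banach fixed point argument followed by parabolic bootstrapping and a restart argument for the extensibility criterion. I would fix $q>2$ and, for $T\in(0,1)$ to be chosen small, work on the closed set
\[
\mathcal{M}_T:=\bigl\{(\tilde u,\tilde v)\in C^0([0,T];C^0(\bar\Omega))\times C^0([0,T];W^{1,q}(\Omega))\,:\,\tilde u(\cdot,0)=u_0,\ \tilde v(\cdot,0)=v_0\bigr\},
\]
intersected with a small ball around $(u_0,v_0)$, and consider the map $\Phi:(\tilde u,\tilde v)\mapsto(u,v)$ defined by successively solving the two decoupled linear problems
\[
v_t=\Delta v-v+\tilde u,\qquad u_t=\nabla\cdot(D(\tilde v)\nabla u)-\nabla\cdot\bigl(u\,S(\tilde v)\nabla\tilde v\bigr)+f(\tilde u),
\]
with the Neumann boundary condition \eqref{boundary-data} and the initial data \eqref{initial-data}. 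Using that $D(\tilde v)$ is bounded above and bounded below away from zero by \eqref{diffusion-condition} and continuity of $D$, that $S(\tilde v)\nabla\tilde v\in L^\infty([0,T];L^q(\Omega))$ with $q>2$ is an admissible drift, and the smoothing action of the Neumann heat semigroup for $v$, one obtains well-posedness of each linear problem from Amann's theory of linear parabolic systems (or equivalently from $L^p$ maximal regularity). Shrinking $T$ makes $\Phi$ a self-map on a sufficiently small closed ball of $\mathcal{M}_T$ and a contraction there, so Banach's theorem yields a unique local mild solution $(u,v)$.

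Once $(u,v)$ is continuous, parabolic Hölder and Schauder estimates applied successively to the two equations upgrade the regularity to the classes claimed in the statement, namely $u\in C^{2,1}(\bar\Omega\times(0,T_{\max}))$ and $v\in C^0([0,T_{\max});W^{1,q}(\Omega))\cap C^{2,1}(\bar\Omega\times(0,T_{\max}))$ for every $q>2$. Nonnegativity of $v$ is inherited from the comparison principle applied to the second equation, whose source is the nonnegative function $u$; then nonnegativity and strict positivity of $u$ follow from the parabolic strong maximum principle applied to the first equation, using that $u\equiv 0$ is a subsolution in view of $f(0)=0$.

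The extensibility criterion is obtained by a standard restart argument. Let $T_{\max}$ be the supremum of times for which a classical solution exists on $[0,T)$, and suppose toward a contradiction that $T_{\max}<\infty$ and $M:=\limsup_{t\to T_{\max}}\bigl(\|u(\cdot,t)\|_{L^\infty(\Omega)}+\|v(\cdot,t)\|_{W^{1,\infty}(\Omega)}\bigr)<\infty$. The quantitative inputs into the fixed point construction, in particular the uniform bounds on $D(\tilde v)$ and on the drift $S(\tilde v)\nabla\tilde v$, depend only on this norm, so the local existence time $\tau=\tau(M)>0$ is uniform over initial data obeying such a bound. Restarting the construction at some $t_0\in(T_{\max}-\tau/2,T_{\max})$ then extends the solution beyond $T_{\max}$, contradicting its definition. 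The main obstacle is the simultaneous tracking of two ingredients: the cross-diffusion drift $\nabla\cdot(u S(\tilde v)\nabla\tilde v)$ forces one to control $\nabla v$ in $L^q$ for some $q>2$ so that the drift lies in a subcritical Lebesgue class, while the $v$-dependence of the leading coefficient $D(\tilde v)$ must be handled so that maximal regularity for the $u$-equation stays uniform along the iteration. Once an appropriate function space and ball are pinned down, the remaining estimates are routine and can be lifted verbatim from the references cited in the lemma.
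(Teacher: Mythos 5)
The paper does not carry out a proof of this lemma at all: it simply cites the standard fixed-point-plus-parabolic-regularity argument (adopting Lemma 4.1 of \cite{MW2022} and pointing to \cite{Winkler-Horstmann, Tello+Winkler, Lankeit-2017}), and your sketch is precisely that argument — decoupled linearization, Banach fixed point in $C^0\times W^{1,q}$ with $q>2$, Schauder bootstrapping, maximum principles, and a restart argument for the extensibility criterion. Your proposal is correct and takes essentially the same approach as the one the paper defers to; note only that the blow-up criterion as printed ($\|u\|_{L^\infty}+\|u\|_{W^{1,\infty}}$) is evidently a typo for $\|u\|_{L^\infty}+\|v\|_{W^{1,\infty}}$, which is the quantity your restart argument correctly controls.
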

We will use several interpolation inequalities extensively in the following sections. To start, we present an extended version of the Gagliardo-Nirenberg interpolation inequality, which was established in \cite{Li+Lankeit}.
\begin{lemma}[Gagliardo-Nirenberg interpolation inequality ] \label{GN}
Let $\Omega$ be a  bounded and smooth domain of $\mathbb{R}^n$ with $n \geq 1$. Let $r \geq 1$, $0<q\leq p < \infty$, $s>0$. Then there exists a constant $C_{GN}>0$ such that 
\begin{equation*}
    \left \| f \right \|^p_{L^p(\Omega)}\leq C_{GN}\left ( \left \| \nabla f \right \|_{L^r(\Omega)}^{pa}\left \| f \right \|^{p(1-a)}_{L^q(\Omega)} +\left \| f \right \|^p_{L^s(\Omega)}
 \right )
\end{equation*}
for all $f \in L^q(\Omega)$ with $\nabla f \in (L^r(\Omega))^n$, and $a= \frac{\frac{1}{q}-\frac{1}{p}}{\frac{1}{q}+\frac{1}{n}-\frac{1}{r}} \in [0,1]$.
\end{lemma}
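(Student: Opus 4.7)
The stated inequality is a strengthening of the classical Gagliardo-Nirenberg interpolation inequality: it permits $q$ to be any positive number (including $0<q<1$) and measures the lower-order correction in a separate Lebesgue exponent $L^s$. My plan is to derive it from the classical GN inequality together with Hölder interpolation and Young's inequality.

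First, I would pick an intermediate exponent $q_0 \in [\max(1,q), p]$ lying in the admissible range of the classical GN and apply its standard form to obtain
\begin{equation*}
    \|f\|_{L^p(\Omega)}^p \leq C_1 \bigl( \|\nabla f\|_{L^r(\Omega)}^{p a_0} \|f\|_{L^{q_0}(\Omega)}^{p(1-a_0)} + \|f\|_{L^{q_0}(\Omega)}^p \bigr),
\end{equation*}
with $a_0 = (1/q_0 - 1/p) / (1/q_0 + 1/n - 1/r)$. Then Hölder's inequality yields
\begin{equation*}
    \|f\|_{L^{q_0}(\Omega)} \leq \|f\|_{L^q(\Omega)}^{\theta} \|f\|_{L^s(\Omega)}^{1-\theta}
\end{equation*}
whenever $q_0$ lies between $q$ and $s$, with $\theta \in (0,1)$ defined by $1/q_0 = \theta/q + (1-\theta)/s$. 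Substituting this into the GN estimate and applying Young's inequality with carefully chosen conjugate exponents isolates the product $\|\nabla f\|_{L^r}^{pa}\|f\|_{L^q}^{p(1-a)}$ (with the claimed $a$) plus a residual proportional to $\|f\|_{L^s}^p$. The resulting exponent $a$ then agrees with the stated formula because both the classical GN inequality and the intermediate Hölder step are scale-invariant in $f$.

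The main technical hurdle is the exponent bookkeeping: one must select $q_0$ and $\theta$ so that (i) $a_0 \in [0,1]$ in order for classical GN to apply, (ii) $q_0$ sits between $q$ and $s$ so that the interpolation is valid (when this fails directly, a preliminary second Hölder step is used to reach $q_0$), and (iii) the Young splitting reproduces exactly the claimed exponent $a = (1/q - 1/p)/(1/q + 1/n - 1/r)$. The case $0 < q < 1$ deserves extra care since $L^q$ is only a quasi-Banach space; here one works with the quasi-norm $\bigl(\int|f|^q\bigr)^{1/q}$ and verifies that both Hölder and Young's inequalities still apply in the required forms. A rescaling $f \mapsto \lambda f$ provides a useful dimensional consistency check that the final exponent $a$ is the correct one.
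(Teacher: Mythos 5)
The paper does not prove this lemma at all: it is quoted verbatim from \cite{Li+Lankeit}, so there is no in-paper argument to compare against. Your overall strategy --- reduce to the classical Gagliardo--Nirenberg inequality at an admissible exponent $q_0\ge 1$ and then trade $L^{q_0}$ for $L^q$ with $0<q\le 1$ by H\"older interpolation and Young's inequality --- is indeed the standard route and is essentially how the cited reference proceeds.

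However, as written your central interpolation step has a real gap. The inequality $\|f\|_{L^{q_0}}\le\|f\|_{L^q}^{\theta}\|f\|_{L^s}^{1-\theta}$ requires $q_0$ to lie between $q$ and $s$, and since $s>0$ is arbitrary (it may well satisfy $s\le q\le 1\le q_0$), this step fails in general; the ``preliminary second H\"older step'' you invoke to repair it is unspecified and is precisely where the work lies. The robust fix is to interpolate upward against the left-hand exponent itself: write $\|f\|_{L^{q_0}}\le\|f\|_{L^q}^{\theta}\|f\|_{L^p}^{1-\theta}$ with $1/q_0=\theta/q+(1-\theta)/p$, which is always valid since $q\le q_0\le p$, substitute into the classical estimate, and absorb the resulting power of $\|f\|_{L^p}$ into the left-hand side via Young's inequality; the lower-order term $\|f\|_{L^{q_0}}^p$ is converted into $\|f\|_{L^s}^p$ by the same device. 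A short computation using $a_0+(1-a_0)\theta=1-(1-a_0)(1-\theta)$ then shows that the surviving exponent on $\|\nabla f\|_{L^r}$ is exactly the claimed $a$. Finally, your proposed consistency check $f\mapsto\lambda f$ is vacuous: both sides are homogeneous of degree $p$ in $f$ for \emph{any} value of $a$, so it cannot confirm the exponent; on a bounded domain the dilation scaling that would pin down $a$ is unavailable, and the value of $a$ must simply be verified algebraically.
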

Consequently, we have the following lemma:
\begin{lemma} \label{GNY}
If $\Omega$ be a  bounded and smooth domain of $\mathbb{R}^n$ with $n \geq 1$, and $f \in W^{1,2}(\Omega)$ then there exists a positive constant $C$ depending only on $\Omega$ such that the following inequality
\begin{align} \label{GNY1}
    \int_\Omega f^2 \leq C\eta \int_\Omega |\nabla f|^2 + \frac{C}{\eta^{\frac{n}{2}} } \left (   \int_\Omega |f| \right )^2
\end{align}
holds for all $\eta \in (0,1)$.
\end{lemma}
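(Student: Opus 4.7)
The plan is to derive \eqref{GNY1} from Lemma \ref{GN} followed by a one-parameter Young's inequality, tracked carefully enough that the exponent of $\eta$ in the $\|f\|_{L^1}$-term comes out to exactly $-n/2$.

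First I would invoke Lemma \ref{GN} with the specific choice $p=2$, $q=1$, $r=2$, $s=1$. A direct computation gives
\[
a = \frac{\frac{1}{1}-\frac{1}{2}}{\frac{1}{1}+\frac{1}{n}-\frac{1}{2}} = \frac{n}{n+2} \in [0,1],
\]
so Lemma \ref{GN} yields a constant $C_1>0$ (depending only on $\Omega$) with
\[
\int_\Omega f^2 \leq C_1 \left( \|\nabla f\|_{L^2(\Omega)}^{2a}\, \|f\|_{L^1(\Omega)}^{2(1-a)} + \|f\|_{L^1(\Omega)}^2 \right).
\]

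Next I would apply Young's inequality to the interpolated term with exponents $1/a$ and $1/(1-a)$. Writing $A=\|\nabla f\|_{L^2(\Omega)}^2$ and $B=\|f\|_{L^1(\Omega)}^2$, Young's inequality in the form $xy \leq \varepsilon\, x^{1/a} + C_\varepsilon\, y^{1/(1-a)}$ (with $x=A^a$, $y=B^{1-a}$) gives, for any $\varepsilon\in(0,1)$,
\[
A^a B^{1-a} \leq \varepsilon A + C_2\, \varepsilon^{-\frac{a}{1-a}} B,
\]
with $C_2$ depending only on $a$ (hence only on $n$). Since $a = n/(n+2)$ gives $a/(1-a) = n/2$, choosing $\varepsilon = \eta$ produces exactly the factor $\eta^{-n/2}$ anticipated in the statement.

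Combining these two estimates I obtain
\[
\int_\Omega f^2 \leq C_1 \eta \int_\Omega |\nabla f|^2 + C_1 C_2\, \eta^{-n/2}\!\left(\int_\Omega |f|\right)^{\!2} + C_1 \left(\int_\Omega |f|\right)^{\!2}.
\]
The remaining cosmetic step is to absorb the last term into the penultimate one using $\eta\in(0,1)$, which forces $1\leq \eta^{-n/2}$. Taking $C := C_1(C_2+1)$ gives \eqref{GNY1}.

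There is no serious obstacle here; the only place demanding care is the bookkeeping of the Young exponent, since a miscomputation of $a$ or of $a/(1-a)$ would produce the wrong power of $\eta$. The choice $q=s=1$ in Lemma \ref{GN} is what forces the precise exponent $n/2$ to appear and is therefore the one choice in the argument that is not flexible.
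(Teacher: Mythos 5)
Your proposal is correct and follows exactly the route the paper indicates: apply Lemma \ref{GN} with $p=r=2$, $q=s=1$, then Young's inequality, with the exponent computation $a=\frac{n}{n+2}$ and $\frac{a}{1-a}=\frac{n}{2}$ carried out explicitly. The paper states this in one line without the bookkeeping; your version simply fills in those details.
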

\begin{proof}
The Lemma follows from Lemma \ref{GN} by choosing $p=r=2$ and $q=s=1$ and Young's inequality.
\end{proof}
We will need an interpolation inequality to obtain an $L^q$ bound with $q\geq 2$ for solutions of the system \eqref{general-log-1}. To prove this inequality, we adapt the argument used in the proof of inequality (22) in \cite{Biler+Hebisch}, with some modifications. The following lemma provides a complete proof of this interpolation inequality for the reader's convenience.
\begin{lemma} \label{ine-GN}
    Let $q>0$ and $\Omega \subset \mathbb{R}^2$ be a bounded domain with smooth boundary. Then one can find $C>0$ such that for each $\epsilon >0$, there exists $c(\epsilon )>0$ such that
\begin{equation} \label{unif-GN}
\int_\Omega |u|^{q+1} \leq \epsilon \int_\Omega |\nabla u^{q/2}|^2 \left ( \int_\Omega |u|G(u) \right ) +C \left ( \int_\Omega |u| \right )^{q+1} + c(\epsilon) \int_\Omega |u|
\end{equation}
holds for all $u^{\frac{q}{2}} \in W^{1,2}(\Omega)$, where $G$ is continuous, strictly increasing and  nonnegative  in $[0,\infty)$ such that $\lim_{s\to \infty} G(s) = \infty $. 
\end{lemma}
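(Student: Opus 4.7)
The plan is to establish \eqref{unif-GN} by combining the Gagliardo-Nirenberg inequality of Lemma \ref{GN} with a truncation argument on the super-level set $\{u>M\}$, where $M=M(\epsilon)$ is chosen large enough that $1/G(M)$ is as small as needed. The role of the hypothesis $G(s)\to\infty$ is precisely to enable this choice.

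First I would split
\[
    \int_\Omega u^{q+1} \;=\; \int_{\{u\leq M\}} u^{q+1} \;+\; \int_{\{u>M\}} u^{q+1}.
\]
The low-level piece is trivial: $\int_{\{u\leq M\}} u^{q+1}\leq M^q\int_\Omega u$, and will be absorbed into the term $c(\epsilon)\int_\Omega|u|$ with $c(\epsilon)\sim M^q$. For the high-level piece the key idea is to introduce the Lipschitz truncation
\[
    \tilde f \;:=\; \bigl(u^{q/2}-M^{q/2}\bigr)_+ \;\in\; W^{1,2}(\Omega),
\]
which enjoys two crucial pointwise properties: $|\nabla\tilde f|\leq|\nabla u^{q/2}|$ a.e.\ (since $\nabla\tilde f$ vanishes on $\{u\leq M\}$ and coincides with $\nabla u^{q/2}$ elsewhere), and $\tilde f^{2/q}\leq u\cdot\mathbf 1_{\{u>M\}}$ (because $\tilde f\leq u^{q/2}$ forces $\tilde f^{2/q}\leq u$).

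Next I would apply Lemma \ref{GN} to $\tilde f$ in dimension $n=2$ with the parameters $r=2$, $p=2(q+1)/q$, and both the $q$-parameter and $s$ equal to $2/q$. A direct calculation yields the interpolation exponent $a=q/(q+1)\in(0,1)$, hence $pa=2$ and $p(1-a)=2/q$, so that
\[
    \int_\Omega \tilde f^p \;\leq\; C\,\|\nabla\tilde f\|_{L^2(\Omega)}^2 \int_\Omega \tilde f^{2/q} \;+\; C\Bigl(\int_\Omega \tilde f^{2/q}\Bigr)^{q+1}.
\]
Inserting the two pointwise properties of $\tilde f$ and then exploiting the monotonicity of $G$ in the form $u\leq uG(u)/G(M)$ on $\{u>M\}$, I obtain
\[
    \int_\Omega \tilde f^p \;\leq\; \frac{C}{G(M)}\,\|\nabla u^{q/2}\|_{L^2(\Omega)}^2 \int_\Omega uG(u) \;+\; C\Bigl(\int_\Omega u\Bigr)^{q+1}.
\]

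Finally, the pointwise inequality $u^{q+1}=(\tilde f+M^{q/2})^p\leq C_p(\tilde f^{\,p}+M^{q+1})$ on $\{u>M\}$ gives $\int_{\{u>M\}}u^{q+1}\leq C_p\int_\Omega\tilde f^{\,p}+C_pM^q\int_\Omega u$. Choosing $M=M(\epsilon)$ so large that $C_pC/G(M)\leq\epsilon$, which is possible thanks to $G(s)\to\infty$, and assembling the two contributions yields \eqref{unif-GN} with $c(\epsilon)\sim M(\epsilon)^q$. The main obstacle is the delicate bookkeeping of scaling exponents: the choices $p=2(q+1)/q$ and $q_0=s=2/q$ are forced by the requirement that the Gagliardo-Nirenberg output be $\|\nabla\tilde f\|_{L^2}^2\cdot\int\tilde f^{2/q}$, and it is precisely the identity $\tilde f^{2/q}\leq u$ — coupled with the gradient bound $|\nabla\tilde f|\leq|\nabla u^{q/2}|$ — that allows $\int uG(u)$ to appear multiplied by the small prefactor $1/G(M)$ rather than with an uncontrolled remainder term involving $\|\nabla u^{q/2}\|_{L^2}^2$.
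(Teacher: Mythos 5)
Your proposal is correct and follows essentially the same route as the paper: truncate at a large level, bound the low part by $M^{q}\int_\Omega|u|$, apply the Gagliardo--Nirenberg inequality of Lemma \ref{GN} with exactly the same scaling to the truncated function, and use the monotonicity of $G$ to extract the small factor $1/G(M)$. The only difference is cosmetic --- the paper truncates $u$ itself with a piecewise-linear ramp $\xi$ and then raises to the power $q/2$, whereas you truncate $u^{q/2}$ directly via $(u^{q/2}-M^{q/2})_+$, which if anything yields a slightly cleaner gradient bound.
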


\begin{proof}
    We call 
    \begin{equation}
     \xi(s)   \left\{\begin{matrix}
 0&  |s| \leq N \\
 2(|s|-N)& N< |s| \leq 2N \\
 |s|& |s|>2N.  \\
\end{matrix}\right.
    \end{equation}
    One can verify that
    \begin{align}
        \int_\Omega ||u|-\xi(u)|^{q+1} \leq (2N)^q \int_\Omega |u|
    \end{align}
and,
\begin{align}
    \int_\Omega \xi(u) \leq \frac{1}{G(N)} \int_\Omega uG(u).
\end{align}
Notice that $\nabla \left (  \xi(u) \right )^{\frac{q}{2}} \leq 4|u|^{q-2}|\nabla u|^2$, combine with Lemma \ref{GN}, we obtain
\begin{align}
    \int_\Omega (\xi(u))^{q+1} &\leq c \int_\Omega |\nabla (\xi(u))^{\frac{q}{2}}|^2 \int_\Omega \xi(u) +C \left(  \int_\Omega \xi(u) \right )^{q+1} \notag \\
    &\leq \frac{c}{G(N)} \int_\Omega |\nabla u^{q/2}|^2 \int_\Omega |u|G(u) +C\left(  \int_\Omega |u| \right )^{q+1}
\end{align}
Hence
\begin{align}
    \int_\Omega |u|^{q+1} &\leq c\left ( \int_\Omega |\xi(u)- |u||^{q+1} +\int_\Omega|\xi(u)|^{q+1} \right ) \notag \\ &\leq \frac{c}{G(N)} \int_\Omega |\nabla u^{q/2}|^2 \int_\Omega |u|G(u) +C\left(  \int_\Omega |u| \right )^{q+1} +(2N)^q \int_\Omega |u|.
\end{align}
We finally complete the proof by choosing N sufficiently large such that $\frac{c}{G(N)} \leq \epsilon $.
\end{proof}
By employing parabolic regularity theory in Sobolev spaces and Orlicz spaces, we can dominate chemical signal terms with density ones. The following lemma states the results in Sobolev spaces, and interested readers can refer to Lemma 2.1 in \cite{Freitag-2018} for additional details.
\begin{lemma} \label{Para-Reg}
Let $p\geq 1$ and $q \geq 1$ satisfy 
\begin{equation*}
    \begin{cases}
     q &< \frac{np}{n-p},  \qquad \text{when } p<n,\\
     q &< \infty, \qquad \text{when } p=n,\\
      q &= \infty, \qquad \text{when } p>n.\\
     \end{cases}
\end{equation*}
Assuming $V_0 \in W^{1,q}(\Omega)$ and $V$ is a classical solution to the following system
\begin{equation}\label{parabolic-equation}
    \begin{cases}
     V_t = \Delta V  - a V + f &\text{in } \Omega \times (0,T), \\ 
\frac{\partial V}{\partial \nu} =  0 & \text{on }\partial \Omega \times (0,T),\\ 
 V(\cdot,0)=V_0   & \text{in } \Omega
    \end{cases}
\end{equation}
where $a>0$ and $T\in (0,\infty]$. If $f \in L^\infty \left ( (0,T);L^p(\Omega) \right ) $, then $V  \in L^\infty \left ( (0,T);W^{1,q}(\Omega) \right )$.
\end{lemma}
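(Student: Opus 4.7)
The plan is to apply Duhamel's formula to \eqref{parabolic-equation} and then invoke standard $L^p$--$L^q$ smoothing estimates for the Neumann heat semigroup $\{e^{t\Delta}\}_{t\geq 0}$ on the smooth bounded domain $\Omega$. Concretely, one writes
\begin{equation*}
V(\cdot,t) = e^{-at}\, e^{t\Delta} V_0 + \int_0^t e^{-a(t-s)}\, e^{(t-s)\Delta} f(\cdot,s)\, ds, \qquad t \in (0,T),
\end{equation*}
which reduces the problem to bounding the two resulting contributions in $W^{1,q}(\Omega)$ uniformly in $t$.

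For the initial-data term I would rely on the contractivity of $e^{t\Delta}$ on $W^{1,q}(\Omega)$ together with the decay factor $e^{-at}$, which gives a uniform bound in terms of $\|V_0\|_{W^{1,q}(\Omega)}$. For the forcing term I would invoke the standard (Winkler-type) gradient estimate
\begin{equation*}
\|\nabla e^{t\Delta}\varphi\|_{L^q(\Omega)} \leq C\bigl(1 + t^{-\frac{1}{2}-\frac{n}{2}\left(\frac{1}{p}-\frac{1}{q}\right)}\bigr)\|\varphi\|_{L^p(\Omega)}
\end{equation*}
and its weaker analogue without the gradient, producing
\begin{equation*}
\|V(\cdot,t)\|_{W^{1,q}(\Omega)} \leq C\|V_0\|_{W^{1,q}(\Omega)} + C\|f\|_{L^\infty((0,T);L^p(\Omega))} \int_0^t \bigl(1+(t-s)^{-\alpha}\bigr) e^{-a(t-s)}\, ds,
\end{equation*}
where $\alpha := \tfrac{1}{2} + \tfrac{n}{2}\bigl(\tfrac{1}{p}-\tfrac{1}{q}\bigr)$.

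The only point demanding care is showing that the time integral above is bounded uniformly in $t \in (0,T)$; this happens precisely when $\alpha < 1$, while the exponential factor $e^{-a(t-s)}$ handles the tail at infinity. A direct algebraic check shows $\alpha < 1 \Longleftrightarrow \tfrac{1}{p}-\tfrac{1}{q} < \tfrac{1}{n}$, which translates exactly into the three cases in the hypothesis: $q<\tfrac{np}{n-p}$ if $p<n$, $q<\infty$ if $p=n$, and $q=\infty$ admissible if $p>n$. I do not expect serious technical obstacles beyond citing the sharp Neumann semigroup bounds; once these are in hand the argument is essentially an exercise in balancing exponents.
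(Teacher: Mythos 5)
Your argument is correct, and it coincides with the standard proof of the result the paper is quoting: the paper itself gives no proof of this lemma but imports it from Lemma~2.1 of \cite{Freitag-2018}, whose proof is exactly your Duhamel decomposition combined with the Neumann heat semigroup smoothing estimates, with the admissibility of $(p,q)$ reducing to $\alpha=\tfrac12+\tfrac n2(\tfrac1p-\tfrac1q)<1$ as you computed. The only details worth making explicit are the $W^{1,q}$-boundedness of $e^{t\Delta}$ (i.e.\ $\|\nabla e^{t\Delta}\varphi\|_{L^q(\Omega)}\le C\|\nabla\varphi\|_{L^q(\Omega)}$ for the Neumann semigroup) used on the initial-data term, and the zeroth-order analogue $\|e^{t\Delta}\varphi\|_{L^q(\Omega)}\le C\bigl(1+t^{-\frac n2(\frac1p-\frac1q)}\bigr)\|\varphi\|_{L^p(\Omega)}$ for the $L^q$ part of the norm, both of which are standard and make your exponent bookkeeping close.
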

The following parabolic regularity result plays a important role in the strongly degenerate case where $\inf_{s\geq 0}D(s) =0$. Indeed, it was proved that equation \eqref{parabolic-equation} possesses a global bounded solution under a suitable slow growth condition of $f$. Precisely, we have the following proposition, which is a direct application of Corollary 1.3 in \cite{MW2022} with $n=2$.
\begin{Prop} \label{Orlicz-regularity}
For each $a>0$, $q>n$, $K>0$ and $\tau>0$, there exist $C(a,q,K,\tau)>0$ such that if $T\geq 2\tau $, $f \in C^0(\Bar{\Omega}\times[0,T])$, and  $V \in C^0 (\Bar{\Omega}\times [0,T]) \cap C^{2,1}(\Bar{\Omega}\times [0,T] ) \cap C^0 ([0,T),W^{1,q}(\Omega))$ are such that \eqref{parabolic-equation} is satisfied with
\begin{align}
    \int_t ^{t+\tau} \int_\Omega |f|^2 \ln^\alpha(|f|+e) <K\text{ for all } t \in ( 0, T-\tau ). 
\end{align}
and  \[\left \|V_0 \right \|_{ W^{1,q}(\Omega)}<K, \]
then
\begin{align}
    |V(x,t)| \leq C(a,q,K,\tau) \quad \text{ for all }(x,t)\in \Omega \times (0,T).
\end{align}
\end{Prop}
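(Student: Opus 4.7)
The plan is to treat this as a direct specialization of \cite[Corollary 1.3]{MW2022} to the two-dimensional setting $n=2$, as the excerpt itself announces. Accordingly, the substantive work is limited to verifying that each hypothesis of that corollary is matched in the present formulation: the equation $V_t = \Delta V - a V + f$ with homogeneous Neumann boundary data and positive damping $a>0$ fits the linear parabolic framework treated there; the smoothness classes $V \in C^0 \cap C^{2,1} \cap C^0([0,T),W^{1,q})$ and $f \in C^0$ coincide with the regularity prescribed there; the condition $\|V_0\|_{W^{1,q}(\Omega)}<K$ with $q>n=2$ supplies the initial-datum control; and the moving-window Orlicz-type bound $\int_t^{t+\tau}\int_\Omega |f|^2 \ln^\alpha(|f|+e) < K$ is exactly the $L^2 \log^\alpha L$ integrability of the forcing postulated there. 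With these matchings in place, the quoted corollary immediately delivers the stated uniform $L^\infty$ bound $|V(x,t)| \leq C(a,q,K,\tau)$.

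If one instead sought a self-contained argument, the natural route would be the Duhamel representation
\[
V(t) = e^{t(\Delta - a)}V_0 + \int_0^t e^{(t-s)(\Delta - a)} f(s)\,ds,
\]
handling the two summands separately. The homogeneous term is uniformly bounded on $[0,T]$ through the embedding $W^{1,q}(\Omega) \hookrightarrow L^\infty(\Omega)$, valid because $q>n=2$, combined with the contractivity of the Neumann heat semigroup reinforced by the exponential decay $e^{-at}$. For the forcing term one would partition $[0,t]$ into successive windows of length $\tau$, use the fact that within each window $f$ is controlled in the Orlicz class $L^2 \log^\alpha L$ by $K$, and then invoke Orlicz--Sobolev smoothing properties of the Neumann heat semigroup to convert this integrability into an $L^\infty$ bound on the Duhamel integral, with the exponential damping absorbing the summation over past windows.

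The principal obstacle, and the reason the authors prefer to quote \cite{MW2022} rather than to reprove the estimate, lies in the criticality of two dimensions. The heat semigroup $e^{t\Delta}$ fails to map $L^2(\Omega)$ continuously into $L^\infty(\Omega)$ in $n=2$: this is the borderline Sobolev embedding failure at the critical exponent. The logarithmic factor $\ln^\alpha(|f|+e)$ in the hypothesis is precisely the refinement that closes this gap, and establishing the corresponding sharpened smoothing estimate in Orlicz spaces — together with the Calder\'on--Zygmund-type machinery needed to propagate it into a pointwise bound — is the delicate analytic contribution of \cite{MW2022}. I would therefore invoke that corollary rather than attempt to re-derive these estimates here.
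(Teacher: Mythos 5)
Your proposal matches the paper exactly: the paper offers no independent proof of this proposition and simply presents it as a direct application of Corollary 1.3 of \cite{MW2022} with $n=2$, which is precisely the citation-and-hypothesis-matching route you take. Your additional Duhamel sketch and the discussion of the critical two-dimensional embedding failure are a reasonable gloss on why the result is nontrivial, but the operative argument in both your proposal and the paper is the invocation of that corollary.
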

The following Lemma is useful in iteration procedure to obtain $L^\infty$ bounds from $L^q$ bounds for some $q>1$.
 \begin{lemma} \label{series}
 Suppose that the positive sequences $(a_k,b_k, u_k)_{k\geq 1}$ satisfy the following conditions:
 \begin{equation}\label{c-l2}
 \begin{cases}
  u_{k+1}\leq a_k+b_ku_k,\\
  \sum_{k=1}^{\infty}a_k =a <\infty,\\
  \prod_{k=1}^\infty b_k =b <\infty,\\
  b_k \geq 1,
 \end{cases}
 \end{equation}
 for all $k \in \mathbb{N}$, then $\sup_k u_k \leq ab+bu_1$.
 \end{lemma}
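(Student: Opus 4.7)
The plan is to iterate the recursion $u_{k+1} \leq a_k + b_k u_k$ and then estimate each resulting term using the hypothesis $b_k \geq 1$, which ensures that every partial product of the $b_i$'s is bounded above by the full infinite product $b$.

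First I would unfold the inequality by induction on $k$. One application gives $u_{k+1} \leq a_k + b_k a_{k-1} + b_k b_{k-1} u_{k-1}$, and iterating all the way down to $u_1$ yields the identity
\begin{equation*}
    u_{k+1} \leq \sum_{j=1}^{k} \left( \prod_{i=j+1}^{k} b_i \right) a_j + \left( \prod_{i=1}^{k} b_i \right) u_1,
\end{equation*}
with the convention that an empty product equals $1$. This expansion is entirely mechanical: each step just substitutes the recursion into the previous bound and collects the accumulating product factors in front of each $a_j$.

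Next I would use $b_i \geq 1$ to bound every partial product $\prod_{i=j+1}^{k} b_i$ from above by the infinite product $\prod_{i=1}^{\infty} b_i = b$, and similarly $\prod_{i=1}^{k} b_i \leq b$. Substituting into the display above gives
\begin{equation*}
    u_{k+1} \leq b \sum_{j=1}^{k} a_j + b u_1 \leq b \sum_{j=1}^{\infty} a_j + b u_1 = ab + b u_1.
\end{equation*}
Since this bound is independent of $k$, taking the supremum over $k \geq 1$ (and noting it also holds for $k=0$, i.e.\ $u_1 \leq ab + b u_1$ trivially) yields $\sup_k u_k \leq ab + b u_1$.

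There is no real obstacle here; the only point one has to be careful about is handling the endpoints of the iteration (empty products) and confirming that the hypothesis $b_k \geq 1$ is what permits replacing partial products by the full infinite product. Without that monotonicity-of-partial-products property, the argument would fail, so its role should be highlighted when writing up the proof.
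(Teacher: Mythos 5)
Your proposal is correct and follows essentially the same route as the paper: unfold the recursion into a sum of $a_j$'s weighted by partial products of the $b_i$'s plus a $u_1$ term, then use $b_k \geq 1$ to bound every partial product (including the empty one) by the full product $b$. The only differences are cosmetic (indexing conventions in the expanded sum).
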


\begin{proof}
We have 
\begin{align*}
    u_{k+1} &\leq a_k+b_ku_k \leq a_k+a_{k-1}b_k+b_kb_{k-1}u_{k-1}\\
    &\leq a_k +\sum_{i=0}^{k-2} a_{k-1-i}\prod_{j=0}^i b_{k-j} +u_1\prod_{i=1}^k b_i \\
    &\leq b\left ( \sum_{i=1}^k a_i \right ) +bu_1 \leq ab+bu_1.
\end{align*}
\end{proof}
\section{A priori estimates} \label{priori}
In this section, we assume that the system \eqref{general-log-1} admits a classical solution $(u,v)$ and a maximal existence time $T_{\rm max}$, subject to conditions given by \eqref{diffusion-condition} to \eqref{boundary-data}, as established in Lemma \ref{local-existence}. To prove our main theorems, we rely heavily on a bound of the form $L\ln^k(L+e)$ for solutions to the system of equations in \eqref{general-log-1}. The proof utilizes standard variational arguments and fundamental functional inequalities. It is worth noting that the logistic degradation terms in the first equation of \eqref{general-log-1}, given by $-\frac{\mu u^2}{\ln^p(u+e)}$, effectively handle the corresponding cross-diffusion contribution. To precisely state this result, we present the following lemma: 
\begin{lemma} \label{general.lemma1} 
If $p<k<2-p$, then 
   \begin{equation}
       \sup_{t\in (0,T_{\rm max})} \int_\Omega u \ln^k{(u+e)} + |\nabla v|^2 + \sup_{t \in (0, T_{\rm max}-\tau)} \int_t ^{t+\tau} \int_\Omega u^2 \ln^{k-p}(u+e) +(\Delta v)^2 <\infty,
   \end{equation}
   where $\tau = \min \left \{ 1, \frac{T_{\rm max}}{2}\right \}$.
\end{lemma}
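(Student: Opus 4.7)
My plan is to derive a dissipative differential inequality for the energy
\begin{equation*}
y(t):=\int_\Omega u\ln^k(u+e)+\int_\Omega|\nabla v|^2.
\end{equation*}
Setting $F(u)=u\ln^k(u+e)$, I test the first equation of \eqref{general-log-1} against $F'(u)$ and integrate by parts using \eqref{boundary-data}, which produces
\begin{equation*}
\frac{d}{dt}\int_\Omega F(u)+\int_\Omega F''(u)D(v)|\nabla u|^2=\int_\Omega F''(u)uS(v)\nabla u\cdot\nabla v+\int_\Omega F'(u)f(u).
\end{equation*}
The diffusion term on the left is non-negative and will simply be discarded, which keeps the argument insensitive to whether $\inf D=0$ or not. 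Testing the $v$-equation against $-\Delta v$ yields $\frac{d}{dt}\int_\Omega|\nabla v|^2+2\int_\Omega(\Delta v)^2+2\int_\Omega|\nabla v|^2=-2\int_\Omega u\Delta v$, where Young's inequality estimates the right-hand side in terms of $\int_\Omega(\Delta v)^2$ and $\int_\Omega u^2$.

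The main obstacle is the chemotactic cross term $\int_\Omega F''(u)uS(v)\nabla u\cdot\nabla v$, for which I cannot use a lower bound on $D(v)$. Introducing $\Phi(u):=\int_0^u sF''(s)\,ds$ so that $\nabla\Phi(u)=F''(u)u\nabla u$, an integration by parts yields
\begin{equation*}
\int_\Omega F''(u)uS(v)\nabla u\cdot\nabla v=-\int_\Omega\Phi(u)S'(v)|\nabla v|^2-\int_\Omega\Phi(u)S(v)\Delta v,
\end{equation*}
and the sign hypothesis $S'\geq 0$ in \eqref{diffusion-condition} allows me to discard the first term. Since $S\in L^\infty$, Young bounds the remainder by $\epsilon\int_\Omega(\Delta v)^2+C_\epsilon\int_\Omega\Phi(u)^2$. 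The asymptotics $F'(u)\sim\ln^k(u+e)$ and $\Phi(u)\sim ku\ln^{k-1}(u+e)$ as $u\to\infty$ are decisive: on one hand the sub-logistic source contributes $\int_\Omega F'(u)f(u)\leq -\mu\int_\Omega u^2\ln^{k-p}(u+e)+|r|(1+k)\int_\Omega u\ln^k(u+e)$, on the other $\Phi(u)^2\sim k^2u^2\ln^{2(k-1)}(u+e)$. The constraint $k<2-p$ is precisely $2(k-1)<k-p$, so splitting at a large threshold and using $\ln^{2(k-1)-(k-p)}(u+e)\to 0$ converts this into $C_\epsilon\int_\Omega\Phi(u)^2\leq\delta\int_\Omega u^2\ln^{k-p}(u+e)+C$, letting the cross term be absorbed by the dissipative logistic contribution.

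The remaining residual terms succumb to similar sub-quadratic absorption. Because $k>p$, one has $\ln^{-(k-p)}(u+e)\to 0$, so for any $\delta>0$ a threshold splitting gives $\int_\Omega u^2\leq\delta\int_\Omega u^2\ln^{k-p}(u+e)+C_\delta$; the leftover $|r|(1+k)\int_\Omega u\ln^k(u+e)$ is handled by the Young estimate $u\ln^k(u+e)\leq\delta u^2\ln^{k-p}(u+e)+C_\delta\ln^{k+p}(u+e)$ together with $\ln^{k+p}(u+e)\leq\delta u+C_\delta$ and the uniform $L^1$ bound $\|u(\cdot,t)\|_{L^1(\Omega)}\leq C$ obtained by integrating the first equation in \eqref{general-log-1} and exploiting the super-linear coercivity of $u^2/\ln^p(u+e)$ against the linear source $ru$. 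Choosing the small parameters suitably, I arrive at
\begin{equation*}
\frac{d}{dt}y+\tfrac{\mu}{2}\int_\Omega u^2\ln^{k-p}(u+e)+\tfrac{1}{2}\int_\Omega(\Delta v)^2+2\int_\Omega|\nabla v|^2\leq C.
\end{equation*}
Reusing $\int_\Omega u\ln^k(u+e)\leq\delta\int_\Omega u^2\ln^{k-p}(u+e)+C$ to bound $y$ from above converts this into an ODE inequality $\frac{d}{dt}y+\alpha y\leq C$ for some $\alpha>0$, whose comparison-principle solution is uniformly bounded on $(0,T_{\mathrm{max}})$. Integrating the dissipative inequality over arbitrary sub-intervals of length $\tau$ then delivers the claimed time-averaged bounds on $\int_\Omega u^2\ln^{k-p}(u+e)$ and $\int_\Omega(\Delta v)^2$.
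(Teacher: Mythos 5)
Your proposal is correct and follows essentially the same route as the paper: the same energy functional, integration by parts of the chemotactic cross term against a primitive of $sF''(s)$ so that $S'\geq 0$ lets you discard the $|\nabla v|^2$ contribution, absorption of every residual term into $-\mu\int_\Omega u^2\ln^{k-p}(u+e)$ via the pointwise inequalities governed by $p<k<2-p$, and finally Gronwall plus integration over intervals of length $\tau$. The only cosmetic differences are that you package the cross term with a single antiderivative $\Phi$ where the paper splits it into three pieces $\phi_1,\phi_2,\phi_3$, and your detour through the uniform $L^1$ mass bound to absorb $\int_\Omega u\ln^k(u+e)$ is unnecessary, since the pointwise estimate $u\ln^k(u+e)\leq\delta u^2\ln^{k-p}(u+e)+C_\delta$ (valid because the power of $u$ drops from $2$ to $1$) already suffices, which is exactly what the paper uses.
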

\begin{proof}
    We define
 \[
 y(t):= \int_\Omega u \ln^k{(u+e)} + \frac{1}{2}|\nabla v|^2,
 \]
 and differentiate $y(\cdot)$ to obtain
 \begin{align} \label{general.ie.1}
    y'(t)&= \int_\Omega \left ( \ln^k{(u+e)} +k \frac{\ln^{k-1}{(u+e)}}{u+e} \right ) u_t +\nabla v \cdot \nabla v_t  \notag \\
    & := I+J.
 \end{align}
 Now we make use of the first equation of \eqref{general-log-1} to deal with $I$
 \begin{align} \label{general.ie.2}
     I &= \int_\Omega \left ( \ln^k{(u+e)} +k \frac{\ln^{k-1}{(u+e)}}{u+e} \right ) \left ( \nabla \cdot \left ( D(v)\nabla u -u S(v) \nabla v \right ) + f(u) \right ) \notag \\
     &=-k\int_\Omega \frac{D(v)\ln^{k-1}(u+e)}{u+e}|\nabla u|^2 -k(k-1) \int_\Omega \frac{D(v)u \ln ^{k-2}(u+e)}{(u+e)^2} |\nabla u|^2 \notag \\
     &-k \int_\Omega \frac{eD(v)\ln^{k-1}(u+e)}{(u+e)^2 }|\nabla u|^2 +k \int_\Omega\frac{S(v)u \ln ^{k-1}(u+e)}{u+e} \nabla u \cdot \nabla v \notag \\
     &+k(k-1) \int_\Omega \frac{S(v)u^2 \ln ^{k-2}(u+e)}{(u+e)^2}  \nabla u \cdot \nabla v +k\int_\Omega \frac{eS(v)u\ln^{k-1}(u+e) }{(u+e)^2}   \nabla u \cdot \nabla v \notag \\
     &+ \int _\Omega  \left ( \ln^k{(u+e)} +k \frac{\ln^{k-1}{(u+e)}}{u+e} \right ) f(u) \notag \\
     &:= \sum_{i=1}^7 I_i.
 \end{align}
To estimate $I_4$, $I_5$, and $I_6$ from above, we aim to bound them by using two terms $\int_\Omega u^2 \ln ^{k-p}(u+e)$, and  $\int_\Omega (\Delta v)^2$. Achieving this requires a meticulous application of integral by parts and Young's inequality. Specifically, we handle $I_4$ in the following manner:
 \begin{align} \label{general.ie.3}
     I_4 &:= k\int_\Omega \frac{eS(v)u\ln^{k-1}(u+e) }{(u+e)^2}   \nabla u \cdot \nabla v \notag \\
     &= k\int_\Omega S(v) \nabla \phi_1(u) \cdot \nabla v,
 \end{align}
 where \[ \phi_1(u):= \int_0^u \frac{s \ln^{k-1}(s+e)}{s+e} \leq u \ln^{k-1}(u+e). \]
We utilize the integration by parts on equation \eqref{general.ie.3}, taking into account the condition $S' \geq 0$ and applying Young's inequality to obtain
\begin{align} \label{general.ie.4}
    I_4&=-k\int_\Omega S(v) \phi_1(u) \Delta v -k \int_\Omega S'(v) \phi_1(u)|\nabla v|^2 \notag \\
    &\leq c\int_\Omega \phi_1(u)|\Delta v| \notag \\
    &\leq \epsilon  \int_\Omega (\Delta v)^2 +c(\epsilon) \int_\Omega \phi_1^2(u) \notag \\
    & \leq \epsilon \int_\Omega (\Delta v)^2 +c(\epsilon) \int_\Omega u^2 \ln^{2k-2}(u+e) \notag \\
    &\leq \epsilon \int_\Omega (\Delta v)^2 +\epsilon \int_\Omega u^2 \ln^{k-p}(u+e) +c,
\end{align}
 where the last inequality comes from the fact that for any $\delta >0$, there exist a positive constant $c$ depending on $\delta$ such that
 \[
 c(\epsilon)u^2 \ln^{2k-2}(u+e) \leq \delta u^2 \ln^{k-p}(u+e)+c(\delta) \qquad 2k-2<k-p.
 \]
 We apply a similar reasoning to handle $I_5$ and $I_6$. To be more specific, we have:
\begin{align} \label{general.ie.5}
    I_5 &:=k(k-1) \int_\Omega \frac{S(v)u^2 \ln ^{k-2}(u+e)}{(u+e)^2}  \nabla u \cdot \nabla v  \notag \\
    &=k(k-1) \int_\Omega S(v)\nabla \phi_2(u) \cdot \nabla v,
\end{align}
where 
\[
\phi_2(u):= \int_0^u \frac{s^2 \ln ^{k-2}(s+e)}{(s+e)^2} \leq \int_0 ^u \ln ^{k-2}(s+e) \leq u \ln^{k-2}(u+e).
\]
 By using the same procedure to \eqref{general.ie.4}, we obtain 
 \begin{align} \label{general.ie.6}
     I_5&=-k(k-1)\int_\Omega S(v) \phi_2(u) \Delta v -k(k-1) \int_\Omega S'(v) \phi_2(u)|\nabla v|^2 \notag \\
    &\leq c\int_\Omega \phi_2(u)|\Delta v| \notag \\
    &\leq \epsilon  \int_\Omega (\Delta v)^2 +c(\epsilon) \int_\Omega \phi_2^2(u) \notag \\
    & \leq \epsilon \int_\Omega (\Delta v)^2 +c(\epsilon) \int_\Omega u^2 \ln^{2k-4}(u+e) \notag \\
    &\leq \epsilon \int_\Omega (\Delta v)^2 +\epsilon \int_\Omega u^2 \ln^{k-p}(u+e) +c(\epsilon) \quad \left ( 2k-4 <k-p \right).
 \end{align}
  The term $I_6$ can be handled as follows
\begin{align} \label{general.ie.7}
    I_6 &:= k \int_\Omega \frac{euS(v) \ln^{k-1}(u+e)}{(u+e)^2} \nabla u \cdot \nabla v \notag \\
    &= -k\int_\Omega S(v)\nabla \phi_3(u) \cdot \nabla v,
\end{align}
where 
\[
\phi_3(u):= \int_0^u \ln^{k-1}(s+e)\frac{es}{(s+e)^2} \leq  \frac{1}{4}\int_0^u \ln^{k-1}(s+e) \leq u\ln^{k-1}(u+e)
\]
As the right-hand side of \eqref{general.ie.7} resembles that of \eqref{general.ie.3}, we employ the same reasoning to obtain:
\begin{align}\label{general.ie.8}
    I_6 \leq \epsilon \int_\Omega (\Delta v)^2 +\epsilon \int_\Omega u^2 \ln^{k-p}(u+e) +c(\epsilon). 
\end{align}
To handle $I_7$, we make use of the fact that for any $\epsilon>0$, there exist $c(\epsilon)>0$ such that 
\[
u^{a_1}\ln^{b_1}(u+e) \leq \epsilon u^{a_2}\ln^{b_2}(u+e) +c(\epsilon),
\]
where $a_1, a_2, b_1,b_2$ are real numbers such that $a_1 <a_2$. This implies that for any $\epsilon>0$, there exist a positive constant $c$ depending on $\epsilon$ such that
\begin{align}
   \left ( \ln^k{(u+e)} +k \frac{\ln^{k-1}{(u+e)}}{u+e} \right )f(u) &\leq ru\ln^k(u+e)  \notag \\&+rk\ln^{k-1}(u+e)-\mu u^2\ln^{k-p}(u+e) \notag \\
   &\leq (\epsilon-\mu ) \int_\Omega u^2 \ln^{k-p}(u+e) +c(\epsilon).
\end{align}
Therefore, we obtain:
\begin{align}\label{general.ie.9}
    I_7 \leq (\epsilon-\mu ) \int_\Omega u^2 \ln^{k-p}(u+e) +c(\epsilon).
\end{align}
From \eqref{general.ie.2}, \eqref{general.ie.4}, \eqref{general.ie.6}, \eqref{general.ie.8} and \eqref{general.ie.9}, we have
\begin{align} \label{general.ie.10}
    I \leq 3\epsilon \int_\Omega (\Delta v)^2 +(4\epsilon -\mu )\int_\Omega u^2 \ln^{k-p}(u+e) +c(\epsilon).
\end{align}
By integration by parts and elemental inequalities, we handle $J$ as follows:
\begin{align} \label{general.ie.11}
    J &:= \int_\Omega \nabla v \cdot \nabla v_t \notag \\
    &=-\int_\Omega (\Delta v)^2 -\int_\Omega|\nabla v|^2 -\int_\Omega u\Delta v \notag \\
    &\leq -\frac{1}{2}\int_\Omega (\Delta v)^2 -\int_\Omega|\nabla v|^2  +\frac{1}{2 }\int_\Omega u^2 \notag \\
    &\leq -\frac{1}{2}\int_\Omega (\Delta v)^2 -\int_\Omega|\nabla v|^2  +\epsilon \int_\Omega u^2 \ln^{k-p}(u+e)+c(\epsilon).
\end{align}
For any $\epsilon>0$, there exist a positive constant $c(\epsilon)$ such that
\begin{align} \label{general.ie.12}
    \int_\Omega u \ln^{k}(u+e) \leq \epsilon \int_\Omega u^2 \ln^{k-p}(u+e)+c(\epsilon).
\end{align}
By combining \eqref{general.ie.1}, \eqref{general.ie.10}, \eqref{general.ie.11}, and \eqref{general.ie.12}, we obtain that for any $\epsilon>0$, there exist a positive constant $c$ depending on $\epsilon$ such that
\newpage
\begin{align}\label{general.ie.13}
    y'(t) +y(t) +\frac{1}{4}\int_\Omega (\Delta v)^2 +\frac{\mu}{2} \int_\Omega u^2 \ln^{k-p}(u+e) &\leq (3\epsilon-\frac{1}{4})\int_\Omega (\Delta v)^2 \notag \\  &+ (6\epsilon -\frac{\mu}{2})\int_\Omega u^2 \ln^{k-p}(u+e)+c,
\end{align}
Choose $\epsilon$ sufficienly small, we have 
\[
y'(t)+y(t) \leq c.
\]
Using Gronwall's inequality with the previous equation, we can conclude that $y(t)\leq \max \left \{ y(0), c \right \}$.
 Additionally, we also have:
\begin{align}\label{general.ie.14}
    \frac{1}{4}\int_\Omega (\Delta v)^2 +\frac{\mu}{2} \int_\Omega u^2 \ln^{k-p}(u+e) \leq c -y'(t).
\end{align}
By integrating the previous inequality from $t$ to $t+\tau$ and using the fact that $y$ is bounded, we can conclude the proof.
\end{proof}
\begin{remark}
  The non-decreasing assumption of $S$ allows us to obtain a uniform bound for $\left \| u\ln^k{(u+e)}  \right \|_{L^1(\Omega)}$ without using a uniform bound $\left \| u \right \|_{L^1(\Omega)}$ as in \cite{MW2022} and \cite{Tian1}.
\end{remark}
The logistic degradation term $-\frac{-\mu u^2}{\ln^p(u+e)}$ can ensure the boundedness of chemical density functions, even in the presence of strongly degenerate diffusion terms. To state this result precisely, we present the following lemma.
\begin{lemma} \label{general.lemma2}
    If $1+p<k$, and
    \[
    \sup_{t \in (0,T_{\rm max}-\tau)} \int_t^{t+\tau} \int_\Omega u^2 \ln^{k-p}(u+e) < \infty,
    \]
    where  $\tau = \min \left \{ 1, \frac{T_{\rm max}}{2}\right \}$, then $v$ is globally bounded in time.
\end{lemma}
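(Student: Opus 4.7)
The plan is to obtain the uniform-in-time $L^\infty$ bound for $v$ as a direct application of the Orlicz-type parabolic regularity result recorded in Proposition \ref{Orlicz-regularity}. The second equation of \eqref{general-log-1}, namely $v_t = \Delta v - v + u$, matches the abstract parabolic template \eqref{parabolic-equation} with $a=1$, source term $f:=u$, and initial datum $V_0:=v_0$. Since $v_0 \in W^{1,\infty}(\Omega) \hookrightarrow W^{1,q}(\Omega)$ for every $q>2$, the required control on $\|V_0\|_{W^{1,q}(\Omega)}$ is automatic, and the regularity $v \in C^0(\Bar{\Omega}\times[0,T])\cap C^{2,1}(\Bar{\Omega}\times[0,T])\cap C^0([0,T);W^{1,q}(\Omega))$ demanded by the proposition is guaranteed by Lemma \ref{local-existence}.

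The substantive ingredient is the Orlicz-type integrability of the source, and this is exactly what the hypothesis of the lemma supplies. I would take $\alpha := k-p$; the standing assumption $k > 1+p$ gives $\alpha > 1 = n-1$ (with $n=2$), which lies in the range for which Proposition \ref{Orlicz-regularity}, inherited from Corollary 1.3 of \cite{MW2022}, is formulated. At the same time the assumed bound
\begin{equation*}
\sup_{t\in(0,T_{\rm max}-\tau)} \int_t^{t+\tau}\!\!\int_\Omega u^2 \ln^{k-p}(u+e) \leq K
\end{equation*}
is precisely the $L^2\ln^\alpha L$-in-space-time source estimate that the proposition requires. Enlarging $K$ if necessary so that $\|v_0\|_{W^{1,q}(\Omega)}<K$ as well, every hypothesis of Proposition \ref{Orlicz-regularity} is in force.

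Invoking the proposition then yields a constant $C=C(1,q,K,\tau)$, independent of $T\in [2\tau,T_{\rm max})$, such that $|v(x,t)|\leq C$ for all $(x,t)\in\Omega\times(0,T_{\rm max})$, which is the claimed global-in-time bound. There is no genuine obstacle in this lemma beyond the bookkeeping of the Orlicz parameter: one simply matches the exponent $k-p$ produced by Lemma \ref{general.lemma1} to the admissible range $\alpha>n-1$ built into the Orlicz regularity theorem, and this is exactly what the strict inequality $k>1+p$ was introduced to ensure.
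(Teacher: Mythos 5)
Your proposal is correct and follows exactly the route of the paper, which likewise proves this lemma as a direct application of Proposition \ref{Orlicz-regularity} with $\alpha = k-p > 1$, source $f=u$, and initial datum $v_0 \in W^{1,\infty}(\Omega)$. The only difference is that you spell out the bookkeeping (matching the template \eqref{parabolic-equation}, verifying the regularity class from Lemma \ref{local-existence}, enlarging $K$) that the paper leaves implicit.
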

\begin{proof}
    This is a direct application of Proposition \ref{Orlicz-regularity} with $\alpha =k-p>1$.
\end{proof}
We examine the nondegenerate diffusion mechanism and obtain bounds for $u$ and $\nabla v$ through a standard testing procedure.
\begin{lemma} \label{L2est.E1}
If $p<1$, $q\geq 2$, $S' \geq 0$, $\inf_{s\geq 0}D(s)>0$ and $(u,v)$ is a classical solution to \eqref{general-log-1} in $\Omega \times (0,T_{\rm max})$ then there exists a positive constant $C$ such that
\begin{equation} \label{L2estimate}
    \int_\Omega u^q(\cdot,t) + \int_\Omega |\nabla v(\cdot,t)|^{2q} \leq C
\end{equation}
for all $t \in (0, T_{\rm max})$.
\end{lemma}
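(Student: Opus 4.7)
The plan is to derive a single differential inequality of the form $y'(t)+cy(t)\le K$ for
$$y(t):=\int_\Omega u^q(\cdot,t)+\int_\Omega|\nabla v(\cdot,t)|^{2q}$$
by simultaneously testing each equation of \eqref{general-log-1} against a suitable multiplier and then coupling the resulting estimates. For the $u$-equation, multiply by $u^{q-1}$ and integrate by parts; using $\inf_sD(s)\ge d_0>0$, $\|S\|_\infty<\infty$, and the hypothesis $S'\ge 0$ (which turns one of the cross-diffusion contributions into a non-positive term that may be discarded), and dropping the favorable sub-logistic piece $-\mu u^{q+1}/\ln^p(u+e)$, I would arrive at
$$\tfrac{1}{q}\tfrac{d}{dt}\!\int_\Omega\!u^q+\tfrac{4(q-1)d_0}{q^2}\!\int_\Omega\!|\nabla u^{q/2}|^2\le\tfrac{(q-1)\|S\|_\infty}{q}\!\int_\Omega\!u^q|\Delta v|+r\!\int_\Omega\!u^q.$$
For the $v$-equation I would differentiate $\int|\nabla v|^{2q}$ in $t$, substitute $v_t=\Delta v-v+u$, and apply the pointwise identity $\nabla v\cdot\nabla\Delta v=\tfrac{1}{2}\Delta|\nabla v|^2-|D^2v|^2$ together with integration by parts. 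The boundary integral $\int_{\partial\Omega}|\nabla v|^{2q-2}\partial_\nu|\nabla v|^2$ is absorbed through the classical geometric bound $\partial_\nu|\nabla v|^2\le C_\Omega|\nabla v|^2$ on smooth 2D domains under Neumann data, combined with a trace-Sobolev estimate, yielding a dissipative inequality with $|\nabla v|^{2q-2}|D^2v|^2$ and $|\nabla v|^{2q}$ positive on the left.

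Next I would couple the two inequalities. The cross term $\int u^q|\Delta v|$ splits by Young's inequality into $\varepsilon\int(\Delta v)^2+C(\varepsilon)\int u^{2q}$; the $\int(\Delta v)^2$ portion is controlled via an auxiliary test of the $v$-equation by $-\Delta v$ producing $\tfrac{1}{2}\tfrac{d}{dt}\int|\nabla v|^2+\tfrac{1}{2}\int(\Delta v)^2+\int|\nabla v|^2\le\tfrac{1}{2}\int u^2$, whose time derivative merges into $y'$. The mixed term $\int u^2|\nabla v|^{2q-2}$ arising from the $|\nabla v|^{2q}$ test is handled by Hölder and Young as $\le\varepsilon\int|\nabla v|^{2q}+C(\varepsilon)\int u^{2q}$, and the $|\nabla v|^{2q}$ piece is absorbed into the left-hand-side dissipation. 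All bad contributions are thereby reduced to multiples of $\int u^{2q}$ plus lower-order, already-bounded quantities.

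The final and decisive step is to invoke Lemma \ref{ine-GN} with the logarithmic weight $G(s):=\ln^{k-p}(s+e)$, choosing $p<k<2-p$ as in Lemma \ref{general.lemma1}. Because Lemma \ref{general.lemma1} delivers $\sup_t\int_\Omega u\ln^k(u+e)<\infty$, we have the time-uniform bound $\int_\Omega uG(u)\le\int_\Omega u\ln^k(u+e)\le C$, so Lemma \ref{ine-GN}, together with a 2D Gagliardo-Nirenberg step applied to $w=u^{q/2}$ that converts the exponent of the available gradient into the one it naturally wants, gives
$$\int_\Omega u^{2q}\le\varepsilon\int_\Omega|\nabla u^{q/2}|^2+C(\varepsilon).$$
Taking $\varepsilon$ small enough to absorb every gradient term into the dissipation on the left yields $y'(t)+cy(t)\le K$, and Gronwall's inequality then delivers the stated uniform bound.

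The principal obstacle is the matching of exponents in the last step: testing the $u$-equation with $u^{q-1}$ only generates the gradient norm $\int|\nabla u^{q/2}|^2$, whereas Lemma \ref{ine-GN} applied to bound $\int u^{2q}$ most naturally produces $\int|\nabla u^{(2q-1)/2}|^2$. Bridging this mismatch via the 2D identity $\|w\|_{L^4}^4\le C\|\nabla w\|_{L^2}^2\|w\|_{L^2}^2+C\|w\|_{L^2}^4$ with $w=u^{q/2}$ is delicate because it requires the bound on $\int u^q$ one is trying to establish; the resolution is to carry both $\int u^q$ and $\int|\nabla v|^{2q}$ together inside $y(t)$ and choose the coupling constants so that the absorption is self-consistent. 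The boundary treatment in the $|\nabla v|^{2q}$ estimate is the secondary technical hurdle but is standard in the 2D Neumann setting.
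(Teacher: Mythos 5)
Your overall architecture (the coupled functional $y(t)=\int_\Omega u^q+\int_\Omega|\nabla v|^{2q}$, the dissipation terms $\int_\Omega|\nabla u^{q/2}|^2$ and $\int_\Omega|\nabla|\nabla v|^{q}|^2$, the boundary-trace treatment, and the idea of feeding Lemma \ref{ine-GN} with the $L\ln^k L$ bound of Lemma \ref{general.lemma1}) matches the paper's. But your decisive step has a genuine gap. The Young splittings $\int_\Omega u^q|\Delta v|\le\varepsilon\int_\Omega(\Delta v)^2+C(\varepsilon)\int_\Omega u^{2q}$ and $\int_\Omega u^2|\nabla v|^{2q-2}\le\varepsilon\int_\Omega|\nabla v|^{2q}+C(\varepsilon)\int_\Omega u^{2q}$ produce the term $\int_\Omega u^{2q}$, and the inequality you then need, $\int_\Omega u^{2q}\le\varepsilon\int_\Omega|\nabla u^{q/2}|^2+C(\varepsilon)$, is false. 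Lemma \ref{ine-GN} pairs the exponent $q+1$ with the gradient $\nabla u^{q/2}$; reaching exponent $2q$ would require $\int_\Omega|\nabla u^{(2q-1)/2}|^2$, which is not available. The bridge you propose, $\|u^{q/2}\|_{L^4}^4\le C\|\nabla u^{q/2}\|_{L^2}^2\|u^{q/2}\|_{L^2}^2+\dots$, puts the factor $\|u^{q/2}\|_{L^2}^2=\int_\Omega u^q$ in front of the dissipation --- exactly the quantity being proven bounded --- and even granting that bound the coefficient is a fixed constant, not an arbitrarily small $\varepsilon$, so it cannot be absorbed into $-c\int_\Omega|\nabla u^{q/2}|^2$. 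On scaling grounds, with only $\int_\Omega u\ln^k(u+e)$ under control the power of $\|\nabla u^{q/2}\|_{L^2}$ needed to interpolate $\int_\Omega u^{2q}$ exceeds $2$, so the resulting term is superlinear in the dissipation; this is precisely the supercritical structure that makes two-dimensional Keller--Segel delicate, and no choice of coupling constants circumvents it.

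The repair, which is what the paper does, is to choose the Young exponents so that only $\int_\Omega u^{q+1}$ ever appears. Keep the cross term in the form $c\int_\Omega S(v)u^{q/2}\nabla u^{q/2}\cdot\nabla v\le\epsilon\int_\Omega|\nabla u^{q/2}|^2+c\int_\Omega u^q|\nabla v|^2$ (no second integration by parts, no $\Delta v$), and then split $\int_\Omega u^q|\nabla v|^{2}+\int_\Omega u^2|\nabla v|^{2q-2}\le\epsilon\int_\Omega|\nabla v|^{2q+2}+c(\epsilon)\int_\Omega u^{q+1}$ using the conjugate pairs $\left(\tfrac{q+1}{q},\,q+1\right)$ and $\left(\tfrac{q+1}{q-1},\,\tfrac{q+1}{2}\right)$. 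The term $\int_\Omega|\nabla v|^{2q+2}$ is bounded by $c\int_\Omega|\nabla|\nabla v|^q|^2\int_\Omega|\nabla v|^2+c$ via Gagliardo--Nirenberg and the uniform bound on $\int_\Omega|\nabla v|^2$, hence absorbed into the $v$-dissipation; and $\int_\Omega u^{q+1}$ is exactly what Lemma \ref{ine-GN} with $G=\ln(\cdot+e)$ controls by $\epsilon\int_\Omega|\nabla u^{q/2}|^2\int_\Omega u\ln(u+e)+c$, the small factor being supplied by the lemma itself rather than by an unproven bound. Finally, $r\int_\Omega u^q$ is absorbed by the retained term $-\mu\int_\Omega u^{q+1}/\ln^p(u+e)$, which you discarded too early.
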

\begin{proof}
   We define 
\begin{equation*}
    \phi(t):= \frac{1}{q} \int_\Omega u^q +\frac{1}{2q} \int_\Omega |\nabla v|^{2q},
\end{equation*}
and differentiate $\phi$ to obtain:
\begin{align} \label{L2ets1}
    \phi'(t)&= \int_\Omega u^{q-1}\left [ \nabla \cdot (D(v)\nabla u) - \nabla \cdot (S(v)u \nabla v)  +f(u)  \right ] \notag \\
    &+\int_\Omega |\nabla v|^{2q-2} \nabla v \cdot \nabla  \left ( \Delta v +u- v \right ) \notag \\
    &:= J_1+J_2.
\end{align}
By integration by parts, we have
\begin{align} \label{L2ets2}
    J_1 &=- c\int_\Omega D(v) |\nabla u^{\frac{q}{2}}|^2 +  c\int_\Omega S(v)u^{\frac{q}{2}} \nabla u^{\frac{q}{2}} \cdot \nabla v +r\int_\Omega u^q -\mu\int_\Omega \frac{u^{q+1} }{\ln^p(u+e)} \notag\\
    &:= J_{11}+J_{12}+J_{13}+J_{14}.
\end{align}
Since $\inf_{(x,t)\in \Omega \times (0,T)} D(v(x,t)) >0$, we obtain
\begin{align}
    J_{11} \leq -c\int_\Omega |\nabla u^{\frac{q}{2}}|^2,
\end{align}
for some $c>0$. For any $\epsilon>0$, there exist a positive constant $c$ depending on $\epsilon$ such that
\begin{align} \label{L2ets3}
    J_{12} \leq  \epsilon \int_\Omega |\nabla u^{\frac{q}{2}}|^2 +c\left \| S \right \|_{{L^\infty}(0,\infty)} \int_\Omega u^q |\nabla v|^2.
\end{align}
Choosing $\epsilon$ sufficiently small implies that
\begin{align} \label{J1-estimate}
    J_1 \leq -c\int_\Omega |\nabla u^{\frac{q}{2}}|^2 +c\int_\Omega u^q |\nabla v|^2+r\int_\Omega u^q -\mu\int_\Omega \frac{u^{q+1} }{\ln^p(u+e)} 
\end{align}
In treating $J_2$, we make use of the following pointwise identity
\begin{align*}
    \nabla v \cdot \nabla \Delta v = \frac{1}{2} \Delta( |\nabla v|^2) - |D^2v|^2 
\end{align*}
to obtain
\begin{align} \label{L2ets6}
    J_2&= -c\int_\Omega |\nabla|\nabla v|^q|^2-\int_\Omega |\nabla v|^{2q-2} |D^2 v|^2 \notag\\
    &+\int_\Omega |\nabla v|^{2q-2}\nabla v\cdot \nabla u \notag \\
    &- \int_\Omega |\nabla v|^{2q} +c\int_{\partial \Omega} \frac{\partial |\nabla v|^2 }{\partial \nu}|\nabla v|^{2q-2}.
\end{align}
The inequality $\frac{\partial |\nabla v|^{2q-2}}{\partial \nu} \leq c |\nabla v|^2 $ for some $c>0$ depending only on $\Omega$ implies that
\[
\int_{\partial \Omega} \frac{\partial |\nabla v|^2 }{\partial \nu}|\nabla v|^{2q-2}\, dS \leq c\int_{\partial \Omega }|\nabla v|^{2q} \, dS  .
\]
By Trace Imbedding Theorem and Young's inequality, we obtain
\begin{align} \label{nonconvex-1}
    \int_{\partial \Omega }|\nabla v|^{2q} \, dS \leq \epsilon \int_{\Omega}|\nabla|\nabla v|^q|^2 +c(\epsilon) \int_\Omega |\nabla v|^{2q}.  
\end{align}
 Applying the pointwise inequality $(\Delta v)^2 \leq 2 |D^2 v|^2$ to \eqref{L2ets6} yields
 \newpage
\begin{align} \label{L2ets7}
    J_2  &\leq   -c\int_\Omega |\nabla|\nabla v|^{q}|^2 -\frac{1}{2}\int_\Omega |\nabla v|^{2q-2} |\Delta v|^2 \notag\\
    &+\int_\Omega |\nabla v|^{2q-2}\nabla v\cdot \nabla u +c\int_\Omega |\nabla v|^{2q} \notag \\
    &=J_{21}+J_{22}+J_{23} +J_{24}.
\end{align}
By integration by parts and elemental inequalities, we obtain that for any $\epsilon>0$, there exist a positive constant $c$ depending on $\epsilon$ such that
\begin{align}
    J_{23}=\int_\Omega|\nabla v|^{2q-2}\nabla v \cdot \nabla u 
    &\leq \epsilon \int_\Omega (\Delta v)^2 |\nabla v|^{2q-2} + \epsilon\int_\Omega |\nabla |\nabla v|^q|^2  \notag \\
    &+ c\int_\Omega u^2 |\nabla v|^{2q-2}. 
\end{align}
Choosing $\epsilon$ sufficiently small, we obtain
\begin{align} \label{J2-estimate}
    J_2 \leq -c\int_\Omega |\nabla|\nabla v|^{q}|^2+c\int_\Omega |\nabla v|^{2q}+c\int_\Omega u^2 |\nabla v|^{2q-2}.
\end{align}
By Young inequality, we have
\begin{align} \label{L2ets11}
  c \int_\Omega u^q|\nabla v|^{2}+c \int_\Omega u^2|\nabla v|^{2q-2}\leq   c\epsilon \int_\Omega |\nabla v|^{2q+2}+ c(\epsilon)\int_\Omega u^{q+1}.
\end{align}
Using the Gagliardo-Nirenberg inequality in Lemma \ref{GN} for $n=2$ and Lemma \ref{general.lemma1}, we can conclude that there exists a positive constant $c_{GN}$ such that:
\begin{align} \label{L2ets12}
    \int_\Omega |\nabla v|^{2q+2} &\leq c_{GN}\int_\Omega |\nabla |\nabla v|^q|^2  \int_\Omega  |\nabla v|^2 + c_{GN}\left (\int_\Omega  |\nabla v|^2 \right )^{q+1} \notag \\
    &\leq c\int_\Omega |\nabla |\nabla v|^q|^2  + c.
\end{align}
The condition $0<p<1$ enables us to choose $k\in (p,2-p)$, particularly we select $k=1$ and apply Lemma \ref{general.lemma1} to obtain  the uniformly boundedness of $\left \| u\ln(u+e) \right \|_{L^1(\Omega)}$. This together with Lemma \ref{ine-GN} imply that for any $\epsilon>0$, there exist a positive constant $c$ depending on $\epsilon$ satisfying
\begin{align} \label{L2ets14}
    \int_\Omega u^{q+1} &\leq \epsilon  \int_\Omega |\nabla u^{\frac{q}{2}}|^2   \int_\Omega u \ln(u+e) +c \left ( \int_\Omega u \right )^{q+1} +c \notag\\
  &\leq c\epsilon \int_\Omega |\nabla u^{\frac{q}{2}}|^2+c.
\end{align}
Combining \eqref{L2ets1}, \eqref{J1-estimate}, and from \eqref{J2-estimate} to \eqref{L2ets14}, and choosing $\epsilon$ sufficiently small, we obtain
\begin{align} \label{phi-equation}
 \phi'(t)\leq -c\int_\Omega |\nabla|\nabla v|^{q}|^2 +c\int_\Omega |\nabla v|^{2q}+r \int_\Omega u^q  -\mu \int_\Omega \frac{u^{q+1}}{\ln^p(u+e)}+c.   
\end{align}
For any $\epsilon>0$, there exist a positive constant $c$ depending on $\epsilon$ such that
\[
x^q \leq \frac{\epsilon x^{q+1}}{\ln^p(x+e)}+c.
\]
This implies that 
\begin{align} \label{est-for-u}
    \int_\Omega u^q \leq \epsilon \int_\Omega \frac{u^{q+1}}{\ln^p(u+e)}+c
\end{align}
By applying Lemma \ref{GN} and using the fact that $\left \| \nabla v\right \|_{L^2(\Omega)}$ is uniformly bounded, and combining with Young inequality we obtain that for any $\epsilon>0$, there exist a positive constant $c$ depending on $\epsilon$ such that
\begin{align} \label{est-for-gradiant v}
    \int_\Omega |\nabla |\nabla v|^q|^2 &\leq c_{GN} \left ( \int_\Omega |\nabla |\nabla v|^q|^2 \right )^{\frac{q-1}{q}} \int_\Omega |\nabla v|^2 + \left ( \int_\Omega |\nabla v|^2 \right )^q \notag \\
    &\leq c \left ( \int_\Omega |\nabla |\nabla v|^q|^2 \right )^{\frac{q-1}{q}} +c \notag \\
    &\leq \epsilon \int_\Omega |\nabla |\nabla v|^q|^2 +c
\end{align}
By combining \eqref{phi-equation}, \eqref{est-for-u}, and \eqref{est-for-gradiant v}, and selecting an appropriate value for $\epsilon$, we can conclude that $\phi'(t) + \phi(t) \leq c$. The proof is completed by applying Gronwall's inequality.
\end{proof}
When the chemical concentration function $v$ is bounded, the degeneracies in the diffusion mechanism are eliminated, thus enabling us to derive bounds for $u$ and $\nabla v$. Specifically, we present the following lemma.
\begin{lemma} \label{L2est.E1'}
If $p<1/2$, $q\geq 2$, $S' \geq 0$, and $(u,v)$ is a classical solution to \eqref{general-log-1} in $\Omega \times (0,T)$ then there exists a positive constant $C$ such that
\begin{equation} \label{L2estimate'}
    \int_\Omega u^{q}(\cdot,t) + \int_\Omega |\nabla v(\cdot,t)|^{2q} \leq C
\end{equation}
for all $t \in (0, T_{\rm max})$.
\end{lemma}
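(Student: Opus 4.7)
The plan is to leverage the boundedness of $v$ that becomes available in the degenerate regime once $p<1/2$, and thereby reduce the statement to the argument already carried out in Lemma~\ref{L2est.E1}. The crucial observation is that the strict inequality $p<1/2$ is precisely what permits the choice of an exponent $k$ satisfying $1+p<k<2-p$ simultaneously. For such $k$, Lemma~\ref{general.lemma1} provides a uniform bound on $\int_t^{t+\tau}\int_\Omega u^2 \ln^{k-p}(u+e)$ with $k-p>1$, so Lemma~\ref{general.lemma2} (applied with $\alpha=k-p$) yields $v\in L^\infty(\Omega\times(0,T_{\rm max}))$.

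Once $v$ is known to be uniformly bounded, the continuity and positivity of $D$ on $[0,\infty)$ guarantee $\inf_{(x,t)} D(v(x,t))>0$. From this point on the degeneracy has effectively been neutralized and one may follow the testing procedure of Lemma~\ref{L2est.E1} line by line: differentiate the functional
\[
\phi(t) := \frac{1}{q}\int_\Omega u^q + \frac{1}{2q}\int_\Omega |\nabla v|^{2q},
\]
split the resulting derivative into the $u$-part $J_1$ and the $\nabla v$-part $J_2$, estimate $J_2$ using the pointwise identity $\nabla v\cdot\nabla\Delta v = \tfrac12\Delta|\nabla v|^2-|D^2v|^2$ together with the trace-boundary bound, and absorb the cross terms $\int_\Omega u^q|\nabla v|^2$ and $\int_\Omega u^2|\nabla v|^{2q-2}$ via Young's inequality and the Gagliardo--Nirenberg interpolation from Lemma~\ref{GN}.

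The only step that requires a small check rather than a pure copy is the treatment of $\int_\Omega u^{q+1}$ via Lemma~\ref{ine-GN}: whereas the proof of Lemma~\ref{L2est.E1} applied Lemma~\ref{ine-GN} with the auxiliary function $G(u)=\ln(u+e)$ using the $k=1$ case of Lemma~\ref{general.lemma1}, here I would use $G(u)=\ln^k(u+e)$ with the $k\in(1+p,2-p)$ chosen above, exploiting the $\|u\ln^k(u+e)\|_{L^1(\Omega)}$ bound now available. Combined with the $L^2$ bound on $\nabla v$ (which is part of the same conclusion of Lemma~\ref{general.lemma1}), this yields $\phi'(t)+\phi(t)\leq c$, and Gronwall's inequality finishes the proof.

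The main obstacle is conceptual rather than technical: one must recognize that the threshold $p<1/2$ is exactly what is needed for the simultaneous constraints $k>1+p$ (required by Lemma~\ref{general.lemma2} to bound $v$) and $k<2-p$ (required by Lemma~\ref{general.lemma1} to produce the bound in the first place) to be compatible. Everything else is bookkeeping that mirrors the nondegenerate argument.
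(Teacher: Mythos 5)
Your proposal is correct and follows essentially the same route as the paper: choose $k\in(1+p,2-p)$ (possible exactly because $p<1/2$), invoke Lemma \ref{general.lemma1} and then Lemma \ref{general.lemma2} to bound $v$, conclude $\inf D(v)>0$, and rerun the testing argument of Lemma \ref{L2est.E1}. The only cosmetic difference is your use of $G(u)=\ln^k(u+e)$ in Lemma \ref{ine-GN}, whereas the paper simply reuses the $k=1$ case, which is already implied by the bound for $k>1$.
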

\begin{proof}
     Since $0<p<\frac{1}{2}$, we can select a constant $k \in (1+p, 2-p)$. By utilizing Lemma  \ref{general.lemma1}, we obtain 
    \[
    \sup_{t \in (0, T-\tau)} \int_t ^{t+\tau } \int_\Omega u^2 \ln^{k-p}(u+e) <\infty.
    \]
    Then, applying Lemma \ref{general.lemma2}, we deduce that $v$ is globally bounded in time, implying that $D(v) \geq c>0$. Using the same argument as in the proof of Lemma \ref{L2est.E1}, we can conclude the proof.
\end{proof}
It is possible to obtain an $L^\infty$ bound for solutions of equation \eqref{general-log-1} by using Lemma A.1 in \cite{Winkler-2011}, provided that we have $L^{q_0}$ bounds for some $q_0 > 2$. However, for the sake of completeness, we present a proof that uses the Moser iteration method \cite{Alikakos1, Alikakos2} to establish the iteration process from $L^{q_0}$ to $L^\infty$. To this end, we rely on the following lemma: 
\begin{lemma}\label{bddu}
Let $(u,v)$ be a classical solution of \eqref{general-log-1} on $(0,T_{\rm max})$ and \[ U_q := \max \left \{ \| u_0 \|_{L^\infty(\Omega)}, \sup_{t \in (0, T_{\rm max})} \| u(\cdot, t )  \|_{L^q{\Omega)}}  \right \}.   \] 
If $\sup_{t \in (0, T_{\rm max})} \| u(\cdot, t )  \|_{L^{q}{\Omega)}} < \infty$ for some   $q>n$, then there exists constants $A,B>0$ independent of $q$ such that 
 \begin{align} \label{bddu-ine}
    U_{2q} \leq  (Aq^{B})^{\frac{1}{2q}}U_q.  
 \end{align}
\end{lemma}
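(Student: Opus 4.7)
The plan is a Moser-type iteration where the entire difficulty is tracking the dependence of every constant on $q$. First, the hypothesis $\sup_{t}\|u(\cdot,t)\|_{L^q}<\infty$ with $q>n=2$ feeds directly into Lemma \ref{Para-Reg} applied to the $v$-equation (with exponent $p=q>n$), producing $v\in L^\infty\bigl((0,T_{\rm max}); W^{1,\infty}(\Omega)\bigr)$. In particular $\nabla v$ is bounded in $L^\infty$, $v$ is bounded so $D(v)\ge c_0>0$, and $S(v)$ is bounded. This reduces the task to a Moser iteration for a nondegenerate equation with bounded convective field.

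Testing the first equation of \eqref{general-log-1} against $u^{2q-1}$, integrating by parts, and rewriting $u^{2q-2}|\nabla u|^2 = q^{-2}|\nabla u^q|^2$, I would obtain an inequality of the form
\begin{align*}
\frac{1}{2q}\frac{d}{dt}\int_\Omega u^{2q} + \frac{c_0(2q-1)}{q^2}\int_\Omega |\nabla u^q|^2 \le (2q-1)\|S\|_\infty\|\nabla v\|_\infty \int_\Omega u^{2q-1}|\nabla u| + r\int_\Omega u^{2q},
\end{align*}
where the negative logistic part of $f$ has been discarded. Young's inequality on the cross-diffusion term absorbs half the gradient contribution, leaving, after multiplying through by $2q$,
\begin{align*}
\frac{d}{dt}\int_\Omega u^{2q} + \frac{c_1}{q}\int_\Omega |\nabla u^q|^2 \le C q\int_\Omega u^{2q},
\end{align*}
with $c_1, C$ independent of $q$.

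Two applications of Lemma \ref{GNY} to the function $u^q$ in two dimensions then finish the argument. The first, with parameter $\eta\sim q^{-2}$, bounds the right-hand side by $\tfrac{c_1}{2q}\int|\nabla u^q|^2 + Cq^3 U_q^{2q}$, using the trivial estimate $\int u^q\le U_q^q$; the second, with $\eta\sim q^{-1}$, inserts a coercive $\int u^{2q}$ on the left at the cost of $Cq\,U_q^{2q}$. Together they yield
\begin{align*}
\frac{d}{dt}\int_\Omega u^{2q} + \int_\Omega u^{2q} \le A q^B\, U_q^{2q}
\end{align*}
for absolute constants $A,B>0$. Since the very definition of $U_q$ forces $\int_\Omega u_0^{2q} \le |\Omega|\,\|u_0\|_{L^\infty}^{2q}\le |\Omega|\,U_q^{2q}$, Gronwall's inequality yields $\sup_{t}\|u(\cdot,t)\|_{L^{2q}}^{2q} \le Aq^B U_q^{2q}$ (after absorbing $|\Omega|$ into $A$), which becomes \eqref{bddu-ine} after taking $(2q)$-th roots and comparing with $\|u_0\|_{L^\infty}\le U_q$.

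The only delicate point, and the one on which the entire iteration hinges, is the bookkeeping of the $q$-dependence through each Young and Gagliardo-Nirenberg application. If any estimate were to contribute a factor growing faster than polynomially in $q$, then the subsequent doubling iteration $U_{2^{k+1}q_0}\le \prod_k(Aq_k^B)^{1/(2q_k)}U_{q_0}$ would fail to converge under Lemma \ref{series}, so preserving the form $Aq^B$ with $A,B$ truly independent of $q$ is the whole content of the lemma. Everything else is routine once Lemma \ref{Para-Reg} has supplied the $L^\infty$ control on $v$ and $\nabla v$.
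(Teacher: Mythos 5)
Your proposal is correct and follows essentially the same route as the paper: invoke Lemma \ref{Para-Reg} with $q>n$ to get $v\in L^\infty((0,T_{\rm max});W^{1,\infty}(\Omega))$ (hence $D(v)$ bounded below and $\nabla v$ bounded), test with $u^{2q-1}$, absorb the cross-diffusion term via Young's inequality while tracking the polynomial $q$-dependence, apply Lemma \ref{GNY} with $\eta\sim q^{-2}$ to reduce to $(\int_\Omega u^q)^2$, and conclude by Gronwall and taking $2q$-th roots. The only cosmetic difference is that you apply Lemma \ref{GNY} twice where the paper adds $\int_\Omega u^{2q}$ to both sides first and uses a single application; the exact exponent $B$ differs slightly but, as you note, any fixed polynomial rate suffices for the subsequent iteration.
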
 
\begin{proof}
The primary objective is to initially establish an inequality of the form:
\begin{align} \label{ine-prop}
\frac{d}{dt}\int_{\Omega} u^{2q} +\int_{\Omega} u^{2q} \leq Aq^B \left ( \int_{\Omega} u^{q} \right )^2,
\end{align}
where $A$ and $B$ are positive constants. We then proceed to apply the Moser iteration technique. It is crucial to note that the dependence of all the constants on $q$ is tracked carefully. Multiplying the first equation in the system \eqref{general-log-1} by $u^{2q-1}$ we obtain
\begin{align} \label{bddu.1}
    \frac{1}{2q}\frac{d}{dt}\int_{\Omega} u^{2q} &= \int_{\Omega} u^{2q-1}u_t \notag\\
    &=  \int_\Omega u^{2q-1} \left [ \nabla \cdot (D(v) \nabla u) - \nabla \cdot ( S(v)u \nabla v)  +r u -\frac{\mu u^2}{\ln^p(u+e)} \right ] \notag \\
    &:=I +J+K.
\end{align}
Since there exist $C>0$ such that $\int_\Omega u^q (\cdot,t )< C$ for all $t\in (0, T_{\rm max})$, Lemma \ref{Para-Reg} entails that $v$ is globally bounded, which further implies $\inf_{(x,t)\in \Omega \times (0.T_{\rm max})}D(v(x,t)):=c_1 >0$. Thus, we have
\begin{align}
   I:= -\frac{2q-1}{q^2}\int_{\Omega} D(v) |\nabla u^q|^2 \leq -c_1\frac{2q-1}{q^2} \int_{\Omega}  |\nabla u^q|^2. 
\end{align}
In treating $J$,
\begin{align} \label{bddu.2}
     J &:= \int_\Omega u^{2q-1} \nabla \cdot ( S(v)u \nabla v)\notag\\
    &= \chi \frac{2q-1}{2q} \int_\Omega S(v)\nabla u^{2q} \cdot \nabla v  \\
    &= \chi  \frac{2q-1}{q} \int_\Omega S(v)u^{q} \nabla u^{q}\cdot \nabla v 
\end{align}
 Lemma \eqref{Para-Reg} asserts that $v$ is in $L^\infty\left ( (0,T); W^{1,\infty}(\Omega) \right )$. Thus,
\begin{equation*}
    \sup_{0<t<T}  \left \| \nabla v \right \|^2_{L^\infty}  = c_2 <\infty,
\end{equation*}
 Apply Young inequality  yields 
\begin{align} \label{bddu.3}
    J &\leq \epsilon \int_{\Omega} |\nabla u^q|^2 + \frac{(2q-1)^2}{4q^2 \epsilon} \left \| S \right \|_{L^\infty(0,\infty)} \int _{\Omega} u^{2q} |\nabla v|^2  \notag \\
    &\leq \epsilon \int_{\Omega} |\nabla u^q|^2 + \frac{(2q-1)^2}{4q^2 \epsilon} \left \| S \right \|_{L^\infty(0,\infty)}c_2 \int _{\Omega} u^{2q}.  
\end{align}
It follows from \eqref{bddu.1} and \eqref{bddu.3} that
\begin{align}\label{bddu.5}
   \frac{d}{dt} \int_\Omega u^{2q}+\int_\Omega u^{2q} & \leq 2q\left ( -\frac{2q-1}{q^2} +\epsilon \right ) \int_{\Omega} |\nabla u^{q}|^2 - 2q\mu  \int_\Omega u^{2q+1} \notag \\  
  &+ \left [ \frac{(2q-1)^2}{2q \epsilon}\chi^2c_2 \left \| S \right \|_{L^\infty(0,\infty)} + 2qr+1 \right ] \int_{\Omega} u^{2q}.
\end{align}
Substitute $\epsilon=\min \left \{ \frac{q-1}{q^2}, \mu  \right \} $ into \eqref{bddu.5}  we obtain
\begin{align} \label{*}
    \frac{d}{dt}\int_\Omega u^{2q} + \int_\Omega u^{2q} \leq -2  \int_{\Omega} |\nabla u^{q}|^2+c_3q^2 \int_\Omega u^{2q}
\end{align}
where  $c_3$ are independent of $q$. Apply Lemma \ref{GNY}, and plug into \eqref{*} entails the following inequality  for all $\eta \in (0,1)$
\begin{align}
    \frac{d}{dt}\int_\Omega u^{2q} + \int_\Omega u^{2q} \leq (c_3q^2\eta -2)\int_{\Omega} |\nabla u^{q}|^2+ \frac{c_4q^2}{\eta^{\frac{n}{2}}} \left ( \int_{\Omega} u^q \right )^2,
\end{align}
 where $c_4>0$ independent of $r,\eta$. Substitute $\eta = \min \left \{  \frac{1}{c_3q^2}, 1 \right \}$ into this yields
\begin{align} \label{**}
    \frac{d}{dt}\int_\Omega u^{2q} + \int_\Omega u^{2q} \leq c_5q^{n+2}\left ( \int_{\Omega} u^q \right )^2,
\end{align}
where $c_5$ independent of $q$. Apply Gronwall inequality yields 
\[
\int_\Omega u^{2q}(\cdot,t)  \leq \max \left \{ c_5 q^{n+2} U_q^{2q} ,\int_\Omega u_0^{2q}    \right \}
\]
This entails
\begin{align*}
    \left \| u(\cdot, t)  \right \|_{L^{2q}(\Omega)} \leq \max \left \{ (c_5q^{n+2})^\frac{1}{2q} U_q,  |\Omega|^{\frac{1}{2q}}  \left \| u_0 \right \|_{L^{\infty}(\Omega)}  \right \},
\end{align*}
and further implies that 
\begin{align*}
    U_{2q} \leq (A q^{B})^\frac{1}{2q} U_q
\end{align*}
where ${A} = \max \left \{ c_5, |\Omega| \right \}$ and $B=n+2$. The proof of \eqref{bddu-ine} is complete.
\end{proof}
\section{Proof of main theorems}\label{prooftheorem}
This section focuses on proving our main theorems, starting with the non-degenerate case.
\begin{proof}[Proof of Theorem \ref{nondegenerate}]
    From Lemma  \ref{general.lemma1} and  Lemma \ref{L2est.E1}, for some fixed $q_0>2$ 
    \begin{align}
        \sup_{t\in (0,T_{\rm max})} \int_\Omega u^{q_0} +|\nabla v|^{2q_0} \leq C<\infty.
    \end{align}
By using Lemma \ref{Para-Reg}, we can conclude that $v$ belongs to $L^\infty \left ( (0,T_{\rm max}); W^{1,\infty}(\Omega) \right )$. Furthermore, Lemma \ref{bddu} implies that the following inequality holds
\begin{align} \label{f}
    U_{2^{k+1}q_0} \leq \left (  A(2^{k}q_0)^B \right )^{\frac{1}{2^{k+1}q_0}}U_{2^kq_0}
\end{align}
for all integers $k \geq 0$. After taking the $\log$ of the above inequality, we can use Lemma \ref{series} for the following sequence.
  \begin{align*}
      a_k&= \frac{\ln A}{2^{k+1}q_0}+ \frac{Bk \ln 2}{2^{k+1} q_0} + \frac{B \ln q_0}{2^{k+1}q_0}
  \end{align*}
One can verify that 
\begin{align*}
    \sum_{k=0}^{\infty} a_k = \frac{\ln \left (A(2q_0)^B  \right ) }{q_0}.
\end{align*}
Thus, we obtain
\begin{equation}
     U_{2^{k+1}q_0}\leq A^\frac{1}{q_0}(2q_0)^{\frac{B}{q_0}}U _{q_0} 
  \end{equation}
  for all $k\geq 1$. Send $k \to \infty$ yields
  \begin{equation}  \label{uinfty}
       U_{\infty}\leq  A^\frac{1}{q_0}(2q_0)^{\frac{B}{q_0}}U _{q_0}. 
  \end{equation}
 This implies that $u \in L^\infty \left ( (0,T_{\rm max}); L^\infty (\Omega) \right )$.
\end{proof}
The proof of Theorem \ref{degenerate} is similar to that of Theorem \ref{nondegenerate}, with the additional requirement of showing that the diffusion mechanism remains non-degenerate throughout the evolution of the system.
\begin{proof}[Proof of Theorem \ref{degenerate}]
    By using Lemma \ref{general.lemma2} and Lemma \ref{L2est.E1'}, it follows that for a fixed $q_0>2$, we have
    \begin{align}
        \sup_{t\in (0,T_{\rm max})} \int_\Omega u^{q_0} +|\nabla v|^{2q_0} \leq C<\infty.
    \end{align}
   We can now repeat the same arguments from \eqref{f} to \eqref{uinfty} to establish $L^\infty$ bounds for $u$ and $v$.
\end{proof}

\section*{Acknowledgement}
The writer is indebted to Professor Michael Winkler for his kindly assistance in providing insightful comments, suggestions and valuable references. The writer also would like to thank Prof. Zhengfang Zhou his continuous encouragement and support.
\printbibliography
\end{document}